\theoremstyle{plain} 
\newtheorem{thm}{Theorem}[section]
\newtheorem{lem}[thm]{Lemma} 
\newtheorem{prop}[thm]{Proposition} 
\newtheorem{cor}[thm]{Corollary}
\theoremstyle{definition} 
\newtheorem{defn}[thm]{Definition}
\newtheorem{rem}[thm]{Remark} 
\newtheorem{ex}[thm]{Example}
\newcommand{\A}{\mathcal{A}}
\newcommand{\ZN}{\mathbb{Z}}
\newcommand{\PB}{\mathbb{P}}
\newcommand{\CO}{\mathcal{O}}
\newcommand{\CT}{\mathcal{T}}
\newcommand{\CN}{\mathbb{C}}
\newcommand{\DC}{\mathrm{D}^{{\rm b}}}
\newcommand{\Hom}{\mathrm{Hom}}
\newcommand{\homd}{\mathrm{hom}}
\newcommand{\Ext}{\mathrm{Ext}}
\newcommand{\ext}{\mathrm{ext}}
\newcommand{\RHom}{\mathrm{RHom}}
\newcommand{\CH}{\mathcal{H}}
\newcommand{\HH}{\mathrm{HH}}
\newcommand{\HO}{H}
\newcommand{\NHH}{\mathrm{NHH}}
\newcommand{\supp}{\mathrm{supp}}
\numberwithin{equation}{section}
\begin{document}

\title{A new phantom on a rational surface}

\author{Shihao Ma}
\address{Center for Applied Mathematics and KL-AAGDM, Tianjin University, Weijin Road 92, Tianjin 300072, P. R. China}%
\email{shma@tju.edu.cn}%

\author{Yirui Xiong}
\address{School of Sciences, Southwest Petroleum University, Chengdu 610500, P. R. China}
\email{yiruimee@gmail.com}%

\author{Song Yang}
\address{Center for Applied Mathematics and KL-AAGDM, Tianjin University, Weijin Road 92, Tianjin 300072, P. R. China}%
\email{syangmath@tju.edu.cn}%

\begin{abstract}
We construct a universal phantom subcategory on the blow-up of the complex projective plane at 11 general points. 
This phantom subcategory is the orthogonal complement of a non-full exceptional collection of line bundles of maximal length. 
It provides a new counterexample to a conjecture of Kuznetsov and to a conjecture of Orlov.
The first counterexample was constructed by Krah [Invent. Math. {\bf 235} (2024), 1009--1018].
As an application,
we construct a new co-connective DG-algebra whose derived category
is a phantom.
We also show in Appendix B that every smooth projective surface with an effective smooth anti-canonical divisor has no phantom subcategories, e.g. weak del Pezzo surfaces.
\end{abstract}

\date{\today}

\subjclass[2020]{Primary  14F08; Secondary 14J26, 18G80}
\keywords{Derived category of coherent sheaves, Semi-orthogonal decomposition, exceptional collection, phantom subcategory}

\maketitle


\section{Introduction}

The purpose of this paper is to provide a new phantom subcategory of the bounded derived category of coherent sheaves on a smooth rational projective surface.
A full triangulated subcategory $ \mathcal{A}\subset \DC(X)$, where $X$ is a smooth projective variety, is called  {\it admissible} if the natural inclusion functor admits a right and a left adjoint.
An admissible subcategory of  $\DC(X)$ is called a {\it quasi-phantom} if its Hochschild homology $\HH_{\bullet}(\A)$ vanishes and its Grothendieck group $K_{0}(\A)$ is finite; moreover, if its Grothendieck group vanishes, it is called a {\it phantom}.
The existence of phantom and quasi-phantom subcategories, once considered pathological, its existences are now of great interest.
The first examples of quasi-phantom subcategories were constructed in the derived categories of some surfaces of general type (\cite{BGvBS13, AO13, GS13}), and then phantoms were discovered on the product of surfaces with quasi-phantoms (\cite{GO13}) and on a Barlow surface (\cite{BGvBKS15}); see e.g. \cite{GKMS15,CY18,KK23} for more examples.

For each smooth rational projective surface, by Orlov's blow-up formula \cite{Orl92} and mutations, its derived category always has a full exceptional collection of line bundles.
It was conjectured that there exists no phantom subcategory on smooth rational projective surfaces; see Kuznetsov \cite[Conjecture 1.10]{Kuz14} and Orlov \cite[Conjecture 3.7]{Orl20}.
Recently, Krah \cite{Kra24} constructed an exceptional collection of line bundles of maximal length on the blow-up of the complex projective plane $\mathbb{P}^2$ at $10$ general points such that its orthogonal complement is a universal phantom subcategory. 
This provides the first counterexample for these two conjectures.
In contrast, it is known that all del Pezzo surfaces do not admit phantom subcategories (see \cite{Pir23});
more recently, the blow-up of $\mathbb{P}^2$ at finite set of generic points on a smooth cubic curve  admits no phantom subcategories (see \cite{BK25}). 
In Proposition \ref{Wdl-no-phantoms}, we apply Pirozhkov's results \cite{Pir23,Pir26} to show that every smooth projective surface with an effective smooth anti-canonical divisor has no phantom subcategories following the strategy in \cite{BK25}, e.g. weak del Pezzo surfaces.
Thus, this confirms the conjecture of Borisov--Kemboi \cite[Conjecture 1.3]{BK25} for these surfaces.
Consequently, the questions have arisen as to whether it is possible to construct further phantom subcategories on smooth rational projective surfaces and classify which  surfaces admit phantom subcategories.

It is well-known that an exceptional collection  must be a numerically exceptional collection. 
In this paper, via a slightly different construction of numerically exceptional collections of maximal length, 
we can recover Krah's example and find a new universal phantom category on a smooth rational projective surface.
More precisely, suppose that  $Y$ is the blow-up of the complex projective plane $\mathbb{P}^{2}$ at $11$ general points $p_{i}$, where $1 \leq i\leq 11$.
Let $H$ be the divisor on $Y$ obtained by pulling back a hyperplane in $\PB^{2}$ and $E_{i}$ the exceptional divisor over the point $p_{i}$.
We consider the divisors 
$$
D_{i}:=-3H+\sum_{j=1}^{11}E_{j}-E_{i} \textrm{ and } F:=-10H+3\sum_{j=1}^{11}E_{j},
$$
where $1\leq i\leq 11$.

The main result of this paper is stated as follows:

\begin{thm}\label{main-thm}
There is a semi-orthogonal decomposition 
$$
\DC(Y)=\langle \A_{Y},\CO_{Y},\CO_{Y}(D_{1}),\cdots,\CO_{Y}(D_{11}),\CO_{Y}(F),\CO_{Y}(2F)\rangle,
$$
where $\A_{Y}\subset \DC(Y)$ is a non-trivial universal phantom subcategory.
\end{thm}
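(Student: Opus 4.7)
The strategy mirrors the blueprint of Krah \cite{Kra24} on $\mathrm{Bl}_{10}\PB^{2}$, adapted to the $11$-point configuration and the new divisors $D_{i}, F$. The proof splits into four parts: (a) exceptionality of the proposed collection, (b) maximality of its length with unimodularity of the pseudolattice, (c) vanishing of the additive invariants of $\A_{Y}$, and (d) non-triviality of $\A_{Y}$. Only step (d) is genuinely hard; the first three reduce to cohomology on rational surfaces and standard pseudolattice bookkeeping.

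For (a) I would verify that $(\CO_{Y},\CO_{Y}(D_{1}),\ldots,\CO_{Y}(D_{11}),\CO_{Y}(F),\CO_{Y}(2F))$ is an exceptional collection. Since each line bundle is automatically exceptional, this reduces to proving $H^{\bullet}(Y, L_{j}-L_{i})=0$ for every pair $i>j$ in the chosen order. For each difference divisor $D$ appearing, Riemann--Roch pins down $\chi(\CO_{Y}(D))$, after which the individual cohomology groups are controlled by combining Serre duality against $K_{Y}=-3H+\sum_{j}E_{j}$ (note $D_{i}=K_{Y}-E_{i}$, which is convenient), restriction sequences to the exceptional divisors $E_{i}$ or to the strict transforms of low-degree plane curves through appropriate subsets of the $p_{i}$, and the general-position hypothesis on the eleven points, which forces the relevant linear systems to have their expected dimensions.

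For (b) and (c), the rank of $K_{0}(Y)$ equals $2+\rho(Y)=14$, matching the length of the collection, so the numerical Grothendieck group of $\A_{Y}$ vanishes. Computing the Gram matrix of the Euler pairing on the fourteen line bundles and verifying $\det=\pm 1$ upgrades this to integral unimodularity of the pseudolattice, yielding $K_{0}(\A_{Y})=0$. Additivity of Hochschild homology along the semi-orthogonal decomposition, combined with $HH_{\bullet}(\langle\CO_{Y}(L)\rangle)=\CN[0]$ and the known values $\dim_{\CN}HH_{0}(Y)=14$, $HH_{i}(Y)=0$ for $i\neq 0$, then forces $HH_{\bullet}(\A_{Y})=0$. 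The universal phantom property — vanishing of all additive invariants arising from noncommutative motives — follows because the motive of $Y$ is Tate and is already fully accounted for by the fourteen exceptional summands, leaving no room for any motivic residue in $\A_{Y}$.

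The main obstacle is (d), the non-triviality of $\A_{Y}$, since none of the numerical vanishings in (c) distinguishes a true phantom from the zero category. Following Krah's idea, I would exhibit a non-zero object in $\A_{Y}$ by choosing a coherent sheaf $\mathcal{F}$ supported on a carefully selected low-degree curve on $Y$ (candidates include $(-1)$-curves or strict transforms of cubics through ten of the eleven points) and projecting it onto $\A_{Y}$ via the mutation functors of the semi-orthogonal decomposition. Non-vanishing of this projection is detected by confronting the numerics $\ext^{\bullet}(\mathcal{F},\CO_{Y}(L_{i}))$ with the dimensions that would be forced if $\mathcal{F}$ were generated by the exceptional collection. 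The key technical point — and the reason the $11$-point configuration yields a genuinely new example, distinct from Krah's $10$-point case — is to check that these $\ext$-numerics genuinely obstruct any such generation, which requires fine information about the specific geometry of the chosen divisors $D_{i}$ and $F$ in $\mathrm{Pic}(Y)$ relative to curves on $Y$.
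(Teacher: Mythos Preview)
Your steps (a)--(c) are broadly aligned with the paper, though the paper handles them more cleanly than you suggest. For (a), the paper does not use restriction sequences to curves: it dispatches all $\Ext^{2}$-vanishings by Serre duality plus the observation that the relevant divisors have negative intersection with $H$, and all $\Hom$-vanishings by a known case of the SHGH conjecture (the Dumnicki--Jarnicki bound together with a nefness lemma of Ciliberto--Miranda), after which the numerical exceptionality already forces $\Ext^{1}=0$. For (b) and (c), no Gram-matrix determinant is needed: since $\DC(Y)$ already has a full exceptional collection of length $14$ via Orlov's blow-up formula, any exceptional collection of length $14$ automatically has phantom orthogonal complement, and the universal phantom property follows from the Chow motive of $Y$ being of Lefschetz type (Gorchinskiy--Orlov).

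The genuine gap is in (d). You propose to project a sheaf supported on a curve into $\A_{Y}$ and detect its non-vanishing via ``$\ext$-numerics''. This is not Krah's method, and it is unclear how it could work: because the fourteen line bundles already form a $\ZN$-basis of $K_{0}(Y)$, no numerical invariant of $\mathcal{F}$ can obstruct $\mathcal{F}$ from lying in the subcategory they generate; individual $\ext^{i}$-dimensions do not give a clean criterion either, and you offer no mechanism beyond the hope that ``numerics genuinely obstruct''. The paper (and Krah) instead uses Kuznetsov's \emph{height} theory: one computes the pseudoheight of the collection by tabulating all relative heights $e(L_{i},L_{j})$ and $e(L_{j},L_{i}\otimes\omega_{Y}^{-1})$, finds $\mathrm{ph}=3$, checks that this equals $e(L_{i},L_{i}\otimes\omega_{Y}^{-1})+\dim Y$ for some $i$ so that height equals pseudoheight, and concludes non-fullness from $h>0$ via Kuznetsov's criterion. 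This is the missing key lemma; without it, your plan for (d) does not go through.
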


This theorem  provides a  new counterexample to the conjecture of Kuznetsov and to the conjecture of Orlov,  
and also to the Jordan--H\"{o}lder property for semi-orthogonal decompositions (see e.g. \cite{BGvBS13, Kuz13, Kra24, AK25} for other counterexamples of Jordan--H\"{o}lder property).
Moreover, it also provides a  new geometric counterexample  to a conjecture of Bondal--Polishchuk \cite{BP93} on the transitivity of the braid group action on full exceptional sequences in a triangulated category (see \cite{CHS25} for the first counterexample and \cite{Kra24} for the first rational surface case).

Based on Krah’s example \cite{Kra24},
Mattoo \cite[Proposition 3.3]{Mat25} constructed a strong generator $\mathcal{Q}$ on a phantom category such that $\RHom(\mathcal{Q},\mathcal{Q})$ is a co-connective DG-algebra (see \cite[Theorem 1.1]{Mat25}).
This affirmatively answers a question of Ben Antieau: ``Does there exist a co-connective DG-algebra whose derived category
is a phantom?"
As an application of Theorem \ref{main-thm}, we give further evidence for this question:   

\begin{thm}[Theorem \ref{main-corv2}]\label{main-cor}
There exists a  strong generator $\mathcal{T}$ on the phantom subcategory 
$$
\mathcal{P}_{Y}:=
\langle \CO_{Y}(-2F),\CO_{Y}(-F),\CO_{Y}(-D_{1}),\ldots,\CO_{Y}(-D_{11}),\CO_{Y} \rangle^{\perp} \subset \DC(Y)
$$ 
such that $\RHom(\mathcal{T},\mathcal{T})$ is a co-connective DG-algebra.
\end{thm}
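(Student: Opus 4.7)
The proof follows Mattoo's strategy \cite[Proposition 3.3]{Mat25}, adapted to our new phantom. Let $\mathcal{E} := \langle \CO_Y(-2F), \CO_Y(-F), \CO_Y(-D_1), \ldots, \CO_Y(-D_{11}), \CO_Y \rangle$, so that $\DC(Y) = \langle \mathcal{E}, \mathcal{P}_Y \rangle$, and let $\pi \colon \DC(Y) \to \mathcal{P}_Y$ denote the projection (left adjoint to the inclusion). For each $\mathcal{G} \in \DC(Y)$ the projection fits into a canonical distinguished triangle $\mathcal{G}_{\mathcal{E}} \to \mathcal{G} \to \pi(\mathcal{G}) \to \mathcal{G}_{\mathcal{E}}[1]$ with $\mathcal{G}_{\mathcal{E}} \in \mathcal{E}$. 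The candidate generator will be $\mathcal{T} := \pi(\CO_Y(L))$ for a suitably chosen line bundle $\CO_Y(L) \in \DC(Y)$. Strong generation of $\mathcal{P}_Y$ by $\mathcal{T}$ follows once $\CO_Y(L)$ and $\mathcal{E}$ classically generate $\DC(Y)$, since $\DC(Y)$ carries a strong generator (Rouquier) and strong generation descends to any admissible subcategory through the projection.

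For the co-connectivity of $\RHom(\mathcal{T}, \mathcal{T})$, adjunction combined with the orthogonality $\mathcal{P}_Y = \mathcal{E}^{\perp}$ yields
$$\RHom(\mathcal{T}, \mathcal{T}) \cong \RHom(\CO_Y(L), \pi(\CO_Y(L))).$$
Applying $\RHom(\CO_Y(L), -)$ to the defining triangle produces the long exact sequence
$$\cdots \to \Ext^i(\CO_Y(L), \mathcal{G}_{\mathcal{E}}) \to \Ext^i(\CO_Y(L), \CO_Y(L)) \to \Ext^i(\CO_Y(L), \pi(\CO_Y(L))) \to \Ext^{i+1}(\CO_Y(L), \mathcal{G}_{\mathcal{E}}) \to \cdots ,$$
and since $Y$ is a rational surface we have $\Ext^{>0}(\CO_Y(L), \CO_Y(L)) = H^{>0}(\CO_Y) = 0$. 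Hence the middle term isolates isomorphisms $\Ext^i(\CO_Y(L), \pi(\CO_Y(L))) \cong \Ext^{i+1}(\CO_Y(L), \mathcal{G}_{\mathcal{E}})$ for $i \geq 1$, so the required vanishing $\Ext^i(\mathcal{T}, \mathcal{T}) = 0$ for $i > 0$ reduces to $\Ext^{\geq 2}(\CO_Y(L), \mathcal{G}_{\mathcal{E}}) = 0$. Since $\mathcal{G}_{\mathcal{E}}$ is a finite iterated extension of shifts of the exceptional line bundles of $\mathcal{E}$, a standard spectral-sequence argument further reduces this to controlling $H^j(Y, \CO_Y(M-L))$ for each $\CO_Y(M)$ among $\{\CO_Y(-2F), \CO_Y(-F), \CO_Y(-D_i), \CO_Y\}$, together with the differentials induced by the non-vanishing Ext-pairings inside $\mathcal{E}$. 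Choosing $L$ sufficiently positive that Serre duality forces $H^2(Y, \CO_Y(M-L)) = 0$ for all relevant $M$ kills the top-degree contributions.

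The principal obstacle is verifying that the intermediate spectral-sequence differentials also produce no surviving Ext contributions in degree $\geq 2$. A naive choice of $L$ will typically leave residual obstructions arising from the rich combinatorial interactions among the divisors $F$ and $D_i$, reflecting the fact that the exceptional collection of Theorem \ref{main-thm} is not strong. The delicate step is therefore to pin down a concrete $L$ (and possibly replace the single line bundle $\CO_Y(L)$ by a small direct sum) so that the entire spectral sequence collapses in degrees $\geq 2$. This reduces to an explicit, combinatorially intricate cohomology computation on the blow-up $Y$, directly parallel in spirit to Mattoo's analysis of Krah's ten-point example but with the combinatorics now adapted to eleven blown-up points and to the presence of the extra divisor $\CO_Y(-2F)$ in the exceptional collection.
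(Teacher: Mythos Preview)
Your proposal is not a proof but an outline that explicitly defers the decisive step: you never choose $L$, never verify that $\CO_Y(L)$ together with $\mathcal{E}$ generates $\DC(Y)$, and openly describe the spectral-sequence collapse as ``the delicate step'' still to be carried out. As written, nothing distinguishes this from a heuristic hope that Mattoo's method transfers; the ``combinatorially intricate cohomology computation'' you allude to \emph{is} the content of the theorem. (There is also a convention mismatch: the paper, following \cite{Mat25}, takes co-connective to mean $\Ext^{m}(\mathcal{T},\mathcal{T})=0$ for $m<0$, whereas you are aiming for $\Ext^{>0}=0$; and your semi-orthogonal decomposition $\DC(Y)=\langle\mathcal{E},\mathcal{P}_Y\rangle$ is written in the wrong order for $\mathcal{P}_Y=\mathcal{E}^{\perp}$.)

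The paper's argument is both different in shape and complete. Rather than searching for a single line bundle, one sets $\mathcal{T}=\bigoplus_{i=1}^{14}\iota^{*}\mathcal{F}_i$, where $\{\mathcal{F}_i\}=\{\CO_Y,\CO_Y(E_1),\ldots,\CO_Y(E_{11}),\CO_Y(H),\CO_Y(2H)\}$ is the standard full exceptional collection \eqref{EFC-on-blowupP2}; strong generation is then immediate. The structural input that makes the co-connectivity computation tractable is Lemma~\ref{strong-generator-lem-2}: for the dual collection $\{\mathcal{E}_k\}$ one has $\RHom(\mathcal{E}_i,\mathcal{E}_j)\cong\CN^{\chi}[-2]$ for $i<j$, i.e.\ all forward Exts are concentrated in degree $2$. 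Feeding this into the spectral sequence of \cite[Corollary~2.18]{Mat25} computing $\Ext^{m}(\iota^{*}\mathcal{F}_i,\iota^{*}\mathcal{F}_j)$ forces $E_1^{p,q}=0$ throughout a region that leaves $E_1^{-1,0}$ as the only potentially nonzero term of negative total degree, and the sole relevant differential is the explicit composition map
\[
\bigoplus_{k}\Hom(\mathcal{F}_i,\mathcal{E}_k)\otimes\Hom(\mathcal{E}_k,\mathcal{F}_j)\longrightarrow\Hom(\mathcal{F}_i,\mathcal{F}_j),
\]
whose injectivity is checked by a short direct calculation (almost all summands vanish; the only nontrivial case is $\mathcal{F}_i=\mathcal{E}_k=\CO_Y$). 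The degree-$2$ concentration of the Exts among the $\mathcal{E}_k$ is exactly what collapses the spectral sequence without any search for a special $L$, and is the idea missing from your sketch.
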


The structure of this paper is organized as follows. 
In Section \ref{sec-2-Pre},  we recall some basic definitions and facts on semi-orthogonal decompositions, phantom subcategories, height and pseudoheight of exceptional collections and blow-up of $\PB^{2}$ at points. Section \ref{sec-3-construct} gives the construction of a numerically exceptional collection on the blow-up of $\PB^{2}$ at $11$ points in general position.
The main Theorem \ref{main-thm} is proved in Section \ref{sec-4-pf-thm1}.  
In Section \ref{sec-5-pf-thm2}, we construct a new co-connective DG-algebra whose derived category is a phantom category (Theorem \ref{main-corv2}) and consider the projections of skyscraper sheaves. 
In Appendix \ref{Appendix-HH-AY}, we obtain the dimension of the second Hochschild cohomology of the phantom category $\A_{Y}$ in Theorem \ref{main-thm}.
In Appendix \ref{No-Ph-Wdel}, we prove there exists no phantom subcategories on smooth projective surfaces with an effective smooth
anti-canonical divisor.

Right before the first version \cite{MXY25} of this paper appeared, another work by K. Kemboi, D. Krashen, T. Liu, Y. Liu, E. Mackall, S. Makarova, A. Perry, A. Robotis, and S. Venkatesh \cite{KKL+26} was announced, which also obtained the non-full exceptional collection of line bundles of Theorem \ref{main-thm} following Krah’s method. Our construction differs slightly from theirs.

\subsection*{Acknowledgments}
This work is partially supported by the National Natural Science Foundation of China (No. 12171351 and No. 12501051), and by Sichuan Science and Technology Program (No. 2025ZNSFSC0800).


\section{Preliminaries}\label{sec-2-Pre}

\subsection{Semi-orthogonal decompositions}

Let $X$ be a smooth complex projective variety and $\DC(X)$ the bounded derived category of coherent sheaves on $X$.
 
\begin{defn}
For a positive integer $l\in \ZN$,
an ordered sequence of full triangulated subcategories $\{\A_{1},\cdots,\A_{l}\}$ of $\DC(X)$ is called a {\it semi-orthogonal decomposition} of $\DC(X)$ if the following conditions hold:
\begin{enumerate}
\item[(1)] for all $A_{i}\in\A_{i}$, $A_{j} \in \A_{j}$, one has $\Hom(A_{i},A_{j})=0$ if $j<i$;
\item[(2)] for any object $T\in \DC(X)$, there exists a chain of morphisms 
$$
\xymatrix@C=0.5cm{
0=T_{l} \ar[r] & T_{l-1} \ar[r] & \cdots \ar[r] & T_{1} \ar[r] & T_{0}=T
}
$$
such that the cone $\mathrm{Cone}(T_{i}\rightarrow T_{i-1})\in \A_{i}$ for all $1\leq i\leq l$.
\end{enumerate}
Such a semi-orthogonal decomposition is denoted by 
$$
\DC(X)=\langle \A_{1},\A_{2},\cdots,\A_{n} \rangle.
$$
\end{defn}

\begin{defn}
A full triangulated subcategory $ \mathcal{A} \subset \DC(X) $ is called {\it admissible} if the inclusion functor $ \mathcal{A} \hookrightarrow \DC(X) $ admits both a right and a left adjoint. 
\end{defn}

Let $\A\subset \DC(X)$ be a full triangulated subcategory. Then the {\it left} and {\it right orthogonal complements} of $\A$
are respectively defined as follows:
$$
{}^{\perp}\A := \{ E \in \DC(X) \mid \Hom_{\DC(X)}(E, A) = 0 \text{ for all } A \in \A \}
$$
and
$$
\A^{\perp} := \{ E \in \DC(X) \mid \Hom_{\DC(X)}(A, E) = 0 \text{ for all } A \in \A \}.
$$
If $\A\subset \DC(X)$ is an admissible subcategory, then both ${}^{\perp}\A$ and  $\A^{\perp}$ are admissible subcategories and we have two semi-orthogonal decompositions 
$$
\DC(X)=\langle \A^{\perp},\A\rangle=\langle \A,{}^{\perp}\A \rangle.
$$

\begin{defn}
An object $ A\in \DC(X)$ is called {\it exceptional} if 
$\Hom(A,A)=\CN$ and $\Ext^{k}(A,A)=0$ for all $k\neq 0$.
A sequence  of exceptional objects $\{A_{1},A_{2},\cdots,A_{l}\}$ is called an {\it exceptional collection of length $l$} if $\Ext^{k}(A_{j},A_{i})=0$ for all $1\leq i<j\leq l$ and $k\in \ZN$.
\end{defn}

Suppose that $\{A_{1},A_{2},\cdots,A_{l}\}$ is an exceptional collection on $\DC(X)$. 
Then there is a semi-orthogonal decomposition
$$
\DC(X)=\langle \A_{X}, A_{1},A_{2},\cdots,A_{l} \rangle,
$$
where $\A_{X}$ is the right orthogonal decomposition. Here, for convenience,
$A_{i}$ means the smallest full triangulated subcategory $\langle A_{i} \rangle \subset \DC(X)$  containing $A_{i}$; in particular, the admissible subcategory $\langle A_{i} \rangle$ is equivalent to the bounded derived category of coherent sheaves on a point, i.e. $\langle A_{i} \rangle \cong \DC({\rm Spec} \,\CN)$.

\begin{defn}
An exceptional collection $\{A_{1},A_{2},\cdots, A_{l}\}$ is {\it full} if  the minimal full triangulated subcategory of $\DC(X)$ containing
all objects $A_{i}$ is $\DC(X)$ itself, i.e. its left or right orthogonal complement is trivial.
\end{defn}

\begin{ex}
One of the most famous examples of full exceptional collections is the sequence of line bundles $\{\CO_{\PB^{n}},\CO_{\PB^{n}}(H),\cdots, \CO_{\PB^{n}}(nH) \}$ on the $n$-dimensional complex projective space $\PB^{n}$, where $H\subset \PB^{n}$ is a hyperplane.   
\end{ex}

Given a  semi-orthogonal decomposition $\DC(X) = \langle \A_{1}, \cdots, \A_{l} \rangle$. There is a decomposition of the Grothendieck groups and Hochschild homology groups, respectively, 
$$
K_{0}(\DC(X)) \cong K_{0}(\A_{1}) \oplus \cdots \oplus K_{0}(\A_{l})
$$
and
$$
\mathrm{HH}_{\bullet}(\DC(X))
\cong 
\mathrm{HH}_{\bullet}(\A_{1}) \oplus \cdots \oplus \mathrm{HH}_{\bullet}(\A_{l}).
$$
In particular, if $\mathbb{A}:=\{A_{1},A_{2},\cdots,A_{l}\}$ is an exceptional collection on $\DC(Y)$, then 
$$
K_{0}(\DC(X)) \cong K_{0}(\A_{X})\oplus \ZN^{l}
$$
and
$$
\mathrm{HH}_{\bullet}(\DC(X)) \cong \mathrm{HH}_{\bullet}(\A_{X})\oplus \CN^{l},
$$
where $\A_{X}$ is the right or left orthogonal complement of the sequence $\mathbb{A}$.

\begin{defn}
An admissible subcategory $\A\subset \DC(X)$ is called {\it quasiphantom} if $K_{0}(\mathcal{A})$ is a finite group
and
$\mathrm{HH}_{\bullet}(\mathcal{A})= 0$. 
It is called a \textit{phantom} if, in addition, $K_{0}(\mathcal{A}) = 0$.
\end{defn}

It immediately follows from the definition, one has:

\begin{lem}\label{orth-compl-ph}
Suppose $\DC(X)$ has a full exceptional collection of length $l$.
Then the right orthogonal complement of any exceptional collection of length $l$ is a phantom category.
\end{lem}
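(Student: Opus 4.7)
The plan is to reduce the lemma to a purely formal bookkeeping argument using the additivity of $K_0$ and $\mathrm{HH}_\bullet$ under semi-orthogonal decompositions, both of which have already been recorded in this section. First, since $X$ is smooth projective, the subcategory $\mathbb{A} := \langle A_1, \ldots, A_l \rangle$ generated by the given exceptional collection is admissible, and hence its right orthogonal complement $\A_X := \mathbb{A}^{\perp}$ is also admissible and fits into a semi-orthogonal decomposition $\DC(X) = \langle \A_X, A_1, \ldots, A_l \rangle$. So I only need to verify the vanishing conditions $K_0(\A_X) = 0$ and $\mathrm{HH}_\bullet(\A_X) = 0$.

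For the Hochschild homology, applying the additivity formula to this decomposition and using the hypothesis that $\DC(X)$ admits \emph{some} full exceptional collection of length $l$, I get
$$
\mathrm{HH}_\bullet(\A_X) \oplus \CN^l \cong \mathrm{HH}_\bullet(\DC(X)) \cong \CN^l
$$
as graded $\CN$-vector spaces; comparing dimensions in each degree forces $\mathrm{HH}_\bullet(\A_X) = 0$. For the Grothendieck group, the analogous identity reads $K_0(\A_X) \oplus \ZN^l \cong K_0(\DC(X)) \cong \ZN^l$.

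The only small subtlety is that $K_0(\A_X)$ is not a priori finitely generated; however, as a direct summand of $\ZN^l$ it is automatically free of some finite rank $r \ge 0$, and then comparing ranks in $\ZN^r \oplus \ZN^l \cong \ZN^l$ forces $r = 0$. Hence $K_0(\A_X) = 0$, and combined with the Hochschild vanishing this shows $\A_X$ is a phantom. I do not anticipate any serious obstacle here: once admissibility of $\A_X$ is in hand, everything is an immediate consequence of the additivity formulas stated above and the numerical data $K_0(\DC(X)) \cong \ZN^l$, $\mathrm{HH}_\bullet(\DC(X)) \cong \CN^l$ extracted from the hypothesized full exceptional collection. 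The one point that deserves explicit mention in the write-up is precisely this finitely-generated-summand step for $K_0$, since it is the only place where one cannot just quote additivity and match dimensions.
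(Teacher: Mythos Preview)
Your argument is correct and is exactly what the paper intends: the lemma is stated there without proof, immediately after recording the additivity of $K_0$ and $\mathrm{HH}_\bullet$ under semi-orthogonal decompositions, so the reader is meant to deduce it precisely as you do. Your explicit handling of the direct-summand step for $K_0$ is a nice touch that the paper leaves implicit.
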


Let $ \A \subseteq \DC(X)$ and $ \mathcal{B} \subseteq \DC(X^{\prime})$ be full triangulated subcategories. 
Then the box tensor $\A \boxtimes \mathcal{B} \subseteq \DC(X \times X^{\prime}) $ is the smallest full triangulated subcategory of $ \DC(X \times X^{\prime}) $  closed under direct summands and containing all objects of the form $ p_{X}^{*}A \otimes^{L} p_{X^{\prime}}^{*}B $ for $ A \in \A $ and $ B \in \mathcal{B}$. 
Here, $p_{X}:X\times X^{\prime}\to X$ and $p_{X^{\prime}}:X\times X^{\prime} \to X^{\prime}$ are natural projections.  

\begin{defn}
An admissible subcategory $ \A \subseteq \DC(X) $ is called a {\it universal phantom} if for all smooth projective varieties $X^{\prime}$, the category $ \A \boxtimes \DC(X^{\prime}) $ is also a phantom.
\end{defn}

We denote by $M(X)$ the Chow motive of $X$ over integral coefficients and $\mathbb{L}$ the Lefschetz motive (we refer to \cite{Ma68} for completeness). 
The Chow motive $M(X)$ is said to have {\it Lefschetz type} if $M(X)$ is isomorphic to the direct sum of $\mathbb{L}^{\otimes r}$. 
The following proposition will be used later:

\begin{prop}\label{prop:univ_phantom}
Let $\A\subseteq \DC(X)$ be a phantom category. 
If the Chow motive $M(X)$ has Lefschetz type, then $\A$ is a universal phantom category.
\end{prop}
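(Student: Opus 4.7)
The plan is to propagate the phantom property through box-tensor products using the Lefschetz-type motivic decomposition of $X$ to force K\"unneth identities for the additive invariants $K_{0}$ and $\mathrm{HH}_{\bullet}$.

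First, I would fix a smooth projective variety $X'$ and use the hypothesis $M(X) \cong \bigoplus_{i=1}^{n}\mathbb{L}^{\otimes r_{i}}$ to obtain
$$
M(X \times X') \cong M(X) \otimes M(X') \cong \bigoplus_{i=1}^{n} M(X')(r_{i}).
$$
Applying standard realization functors to this Chow motivic decomposition yields K\"unneth-type isomorphisms for both additive invariants: $K_{0}(X \times X') \cong K_{0}(X) \otimes_{\ZN} K_{0}(X')$ and $\mathrm{HH}_{\bullet}(X \times X') \cong \mathrm{HH}_{\bullet}(X) \otimes_{\CN} \mathrm{HH}_{\bullet}(X')$.

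Next, since $\A$ is admissible there is a semi-orthogonal decomposition $\DC(X) = \langle {}^{\perp}\A, \A\rangle$, and box-tensoring with $\DC(X')$ yields an admissible semi-orthogonal decomposition
$$
\DC(X \times X') = \langle {}^{\perp}\A \boxtimes \DC(X'), \A \boxtimes \DC(X')\rangle.
$$
Additivity of $K_{0}$ and $\mathrm{HH}_{\bullet}$ under semi-orthogonal decompositions splits the invariants of $X \times X'$ accordingly. Combining this splitting with the K\"unneth decomposition above and tracking the action of the motivic projectors summand by summand, one identifies
$$
K_{0}(\A \boxtimes \DC(X')) \cong K_{0}(\A) \otimes K_{0}(X'), \qquad \mathrm{HH}_{\bullet}(\A \boxtimes \DC(X')) \cong \mathrm{HH}_{\bullet}(\A) \otimes \mathrm{HH}_{\bullet}(X').
$$
Since $\A$ is a phantom, $K_{0}(\A) = 0$ and $\mathrm{HH}_{\bullet}(\A) = 0$, so both right-hand sides vanish, showing that $\A \boxtimes \DC(X')$ is a phantom. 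As $X'$ was arbitrary, $\A$ is a universal phantom.

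The main technical obstacle lies in the K\"unneth-type identification in the previous step: for a general admissible subcategory, the invariants $K_{0}(\A \boxtimes \DC(X'))$ and $\mathrm{HH}_{\bullet}(\A \boxtimes \DC(X'))$ need not factor as tensor products, because the motivic projectors cutting out $\A$ inside $\DC(X)$ may fail to be compatible with the decomposition of $M(X \times X')$. The Lefschetz-type hypothesis circumvents this difficulty by reducing the motivic projectors summand by summand to the trivial case $X = \mathrm{Spec}\,\CN$, following the propagation strategy of Gorchinskiy--Orlov \cite{GO13}, where this type of argument was pioneered in the original construction of geometric phantom categories.
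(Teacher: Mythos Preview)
The paper's own proof is a one-line citation: ``This is a combination of Corollary 4.3 and Proposition 4.4 in \cite{GO13}.'' Your sketch ultimately also defers to Gorchinskiy--Orlov, so at the level of attribution the two agree.

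Where your outline is imprecise is the phrase ``tracking the action of the motivic projectors summand by summand'' to obtain $K_{0}(\A \boxtimes \DC(X')) \cong K_{0}(\A) \otimes K_{0}(X')$. The Chow-motivic idempotents realizing the Lefschetz decomposition of $M(X)$ have no a priori relation to the categorical projector onto $\A$ coming from the semi-orthogonal decomposition; these live in different categories and there is nothing to ``track''. The actual mechanism in \cite{GO13} passes through noncommutative (K-)motives: Lefschetz type forces the K-motive $KM(X)$ to be a direct sum of copies of the unit (their Proposition 4.4), the semi-orthogonal decomposition makes $KM(\A)$ a direct summand of this, hence again a sum of units since finitely generated projective $\ZN$-modules are free, and then $K_{0}(\A)=\Hom(\mathbf{1},KM(\A))=0$ forces $KM(\A)=0$, which is precisely the criterion for a universal phantom (their Corollary 4.3).

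If you prefer to avoid K-motives, your outline can be repaired as follows: the external-product K\"unneth map $K_{0}(X)\otimes K_{0}(X')\to K_{0}(X\times X')$ is an isomorphism under the Lefschetz hypothesis, and it carries $K_{0}({}^{\perp}\A)\otimes K_{0}(X')$ into the direct summand $K_{0}({}^{\perp}\A\boxtimes\DC(X'))$; since $K_{0}(\A)=0$ the entire source already equals $K_{0}({}^{\perp}\A)\otimes K_{0}(X')$, so surjectivity forces the complementary summand $K_{0}(\A\boxtimes\DC(X'))$ to vanish. The same argument runs for $\mathrm{HH}_{\bullet}$, where K\"unneth holds unconditionally over $\CN$. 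This is the honest content behind your sketch, and it is slightly more elementary than the K-motive route, though it does not deliver the stronger conclusion $KM(\A)=0$.
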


\begin{proof}
This is a combination of Corollary 4.3 and Proposition 4.4 in \cite{GO13}.
\end{proof}

\subsection{Height and pseudoheight of exceptional collections}

To detect the non-fullness of an exceptional collection, we need the notion of height introduced by Kuznetsov
{\cite[Definition 3.2]{Kuz15}}.
Let $X$ be a smooth complex projective variety and
$\mathbb{E}:=\{ E_1, \cdots, E_n\}$ an exceptional collection on $\DC(X)$.

\begin{defn}
The {\it height} $h(\mathbb{E})$ of $\mathbb{E}$ is defined as
$$
h(\mathbb{E}):=\min \{ k\in \ZN \mid \mathrm{NHH}^{k}(\mathcal{E},\mathcal{D})=0\},
$$
where $\mathcal{D}$ is a DG-enhancement of $\DC(X)$, $\mathcal{E}$ is the DG subcategory of $\mathcal{D}$ generated by $\mathbb{E}$ and $\mathrm{NHH}^{k}(\mathcal{E},\mathcal{D})$ is the normal Hochschild cohomology of $\mathcal{E}$ in $\mathcal{D}$ as a certain DG-module.
\end{defn}

We will use the height to detect the non-fullness of an exceptional collection.

\begin{lem}[{\cite[Proposition 6.1]{Kuz15}}]\label{not-full-criterion}
If the height $h(\mathbb{E})>0$, then the exceptional collection $\mathbb{E}$ is not full.
\end{lem}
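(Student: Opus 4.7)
The plan is to establish the contrapositive: assuming $\mathbb{E}$ is full, I would show that $h(\mathbb{E})\leq 0$. With the definition as stated, it suffices to prove that $\mathrm{NHH}^{k}(\mathcal{E},\mathcal{D})=0$ for every $k\in\mathbb{Z}$, since then the set $\{k\in\mathbb{Z} : \mathrm{NHH}^{k}(\mathcal{E},\mathcal{D})=0\}$ equals all of $\mathbb{Z}$ and its infimum is $-\infty$.

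First I would unpack the construction of normal Hochschild cohomology within Kuznetsov's DG-categorical framework. Fix the DG-enhancement $\mathcal{D}$ of $\DC(X)$ and let $\mathcal{E}\subset\mathcal{D}$ denote the full DG-subcategory generated by $\mathbb{E}$. Following \cite[Section 3]{Kuz15}, the groups $\mathrm{NHH}^{\bullet}(\mathcal{E},\mathcal{D})$ arise as Ext-groups against an $\mathcal{E}$-bimodule $\mathcal{N}$ that measures the obstruction for the inclusion $\mathcal{E}\hookrightarrow\mathcal{D}$ to be a Morita equivalence; concretely, $\mathcal{N}$ can be built from the cone of a natural comparison map between bar-type resolutions of the diagonal bimodules of $\mathcal{E}$ and $\mathcal{D}$.

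Next I would exploit fullness directly. If $\mathbb{E}$ is full, then the triangulated closure of $\mathcal{E}$ inside $\mathcal{D}$ is already $\mathcal{D}$, so the inclusion is a Morita equivalence of pretriangulated DG-categories. Under this equivalence the comparison map of diagonal bimodules becomes a quasi-isomorphism and the cone $\mathcal{N}$ is acyclic. It follows that $\mathrm{NHH}^{k}(\mathcal{E},\mathcal{D})=0$ for every $k$, giving $h(\mathbb{E})=-\infty\leq 0$ and completing the contrapositive.

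The main obstacle is transferring triangulated-categorical fullness into the DG-level acyclicity of $\mathcal{N}$: this is a genuinely derived statement involving bar resolutions and Morita theory for DG-categories, and it is the technical heart of Kuznetsov's construction in \cite[Section 3]{Kuz15}. In a fully written proof I would simply invoke \cite[Proposition 6.1]{Kuz15}; the conceptual message is that the height of $\mathbb{E}$ records exactly the DG-categorical obstruction preventing $\mathbb{E}$ from being a full exceptional collection.
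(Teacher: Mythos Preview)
The paper offers no proof of its own for this lemma; it is stated purely as a citation of \cite[Proposition~6.1]{Kuz15}. Your proposal, after sketching the Morita-equivalence heuristic, ultimately defers to that same reference, so the two treatments coincide.
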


In practice, we use 
the so-called pseudoheight of an exceptional collection.  
 
\begin{defn}\label{pseudoheight-defn} 
 The {\it pseudoheight} $\mathrm{ph}(\mathbb{E})$ of $\mathbb{E}$ is defined to be
$$
\min_{1 \leq a_0 < \cdots < a_p \leq n} \left[e(E_{a_0}, E_{a_1}) + \cdots + e(E_{a_{p-1}}, E_{a_p}) + e(E_{a_p}, E_{a_0} \otimes \omega_{X}^{-1}) - p \right]+\dim X,
$$
where  $e(F,G):=\inf\{k\in \ZN \mid \Ext^{k}(F,G)\neq 0\}$ is the {\it relative height} of  $F$ and $G$ in $\DC(X)$ and $\omega_{X}$ is the canonical sheaf of $X$. 
\end{defn}

By \cite[Lemma 4.5]{Kuz15}, the height $h(\mathbb{E})\geq \mathrm{ph}(\mathbb{E})$.
Sometimes, the height and the pseudoheight are equal;
for example, the following:

\begin{lem}[{\cite[Proposition 4.7]{Kuz15}}]\label{h=ph}
If $\mathrm{ph}(\mathbb{E})=e(E_{i},E_{i}\otimes\omega_{X}^{-1})+\dim Y$ for some $1\leq i\leq n$, then $h(\mathbb{E})= \mathrm{ph}(\mathbb{E})$.
\end{lem}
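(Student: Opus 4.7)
The content of the lemma, combined with the inequality $h(\mathbb{E})\geq\mathrm{ph}(\mathbb{E})$ from \cite[Lemma 4.5]{Kuz15}, reduces the proof to exhibiting a nonzero class of degree exactly $\mathrm{ph}(\mathbb{E})$ in the normal Hochschild cohomology $\mathrm{NHH}^{\bullet}(\mathcal{E},\mathcal{D})$. The plan is to build this class from the index $i$ appearing in the hypothesis.

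First, I would unpack the bar-type complex computing $\mathrm{NHH}^{\bullet}(\mathcal{E},\mathcal{D})$. After applying Serre duality on $X$ and using the exceptional structure of $\mathbb{E}$, its terms decompose into summands indexed by strictly increasing chains $a_0<\cdots<a_p$ in $\{1,\ldots,n\}$, of the form
$$
\RHom(E_{a_0},E_{a_1})\otimes\cdots\otimes\RHom(E_{a_{p-1}},E_{a_p})\otimes\RHom(E_{a_p},E_{a_0}\otimes\omega_{X}^{-1})
$$
placed in a total cohomological degree whose minimum value is precisely the bracketed expression in Definition \ref{pseudoheight-defn}. Taking the infimum over all chains then recovers $\mathrm{ph}(\mathbb{E})$ as the minimum cohomological degree carrying any nonzero contribution to the complex.

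Next, I would filter this complex by chain length $p$ and pass to the associated spectral sequence. The length-zero chain at the index $i$ contributes the tautological class
$$
\alpha\in\Ext^{e(E_i,E_i\otimes\omega_{X}^{-1})}(E_i,E_i\otimes\omega_{X}^{-1})
$$
sitting in total degree $e(E_i,E_i\otimes\omega_{X}^{-1})+\dim X=\mathrm{ph}(\mathbb{E})$. The goal is to show $\alpha$ detects a nonzero class in $\mathrm{NHH}^{\mathrm{ph}(\mathbb{E})}(\mathcal{E},\mathcal{D})$. It is automatically not a coboundary because no term of the complex sits in total degree strictly below $\mathrm{ph}(\mathbb{E})$. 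The remaining task is to verify that $\alpha$ is a cocycle, which amounts to checking that the potential targets of the bar differential on $\alpha$ — contributions from length-one chains at the appropriate total degree — vanish, using the extremality of $e(E_i,E_i\otimes\omega_X^{-1})$ as the chain sum realizing the pseudoheight minimum.

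The main obstacle is this last verification: carefully tracking shifts, signs, and filtration indices in the Hochschild-type complex to confirm that the bar differential has no nonzero component on $\alpha$, and that the chain-length spectral sequence converges with $\alpha$ as a permanent cycle. Once this is granted, $\alpha$ promotes to a nonzero class in $\mathrm{NHH}^{\mathrm{ph}(\mathbb{E})}(\mathcal{E},\mathcal{D})$, yielding $h(\mathbb{E})\leq\mathrm{ph}(\mathbb{E})$ and hence the asserted equality.
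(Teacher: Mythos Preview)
The paper does not prove this lemma; it is stated with a bare citation to \cite[Proposition 4.7]{Kuz15} and no argument is given. There is therefore nothing in the present paper to compare your proposal against.

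For what it is worth, your sketch tracks the shape of Kuznetsov's original argument: the normal Hochschild cohomology is computed by a complex whose summands are indexed by increasing chains, the minimal nonzero total degree of any summand is exactly $\mathrm{ph}(\mathbb{E})$, and the hypothesis guarantees that this minimum is realised by a length-zero chain at the index $i$. Since nothing sits in degree below $\mathrm{ph}(\mathbb{E})$, any cocycle there is automatically not a coboundary. You correctly flag the residual issue---showing that the tautological class $\alpha$ is a cocycle, equivalently that it survives the chain-length spectral sequence---as the point requiring care; in Kuznetsov's treatment this is handled by observing that the relevant differentials out of the $p=0$ column land in pieces whose minimal degree strictly exceeds $\mathrm{ph}(\mathbb{E})$ only when the minimum is attained at $p=0$, so no cancellation can occur. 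Your proposal is a faithful outline of that proof, but the verification you defer is genuinely where the work is, and your sketch does not yet supply it.
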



\subsection{Blow-up of $\PB^{2}$ at points}

Let $\pi: Y \rightarrow \PB^{2}$ be the blow-up of the complex projective plane $\PB^{2}$ in $n$ closed points $p_{i}$, where $1\leq i\leq n$.
We use $E_{i}:=\pi^{-1}(p_{i})\subset Y $ to denote the $(-1)$-curve over the points $p_{i}$.
The Picard group of $Y$ is 
$$
\mathrm{Pic}(Y)=\ZN H\oplus \ZN E_{i}\oplus \cdots \oplus \ZN E_{n},
$$
where $H$ is the divisor class obtained by pulling back the class of a hyperplane in $\PB^{2}$.
The intersection numbers 
$$
H^{2}=1,\, E_{i}^{2}=-1, \, H\cdot E_{i}=0\; \textrm{ and } E_{i}\cdot E_{j}=0 \; \textrm{for } i\neq j.
$$
The canonical class $K_{Y}=-3H+\sum_{i=1}^{n}E_{i}$ with self-intersection $K_{Y}^{2}=9-n$.
For any  divisor $D$ on $Y$, it  can be uniquely written as a combination
$$
D=dH-\sum_{i=1}^{n}m_{i}E_{i},
$$
where $d$ and all $m_{i}$ are integers.

\begin{lem}\label{SHGH-conj-lemma}
Suppose that $Y$ is the blow-up of $\PB^{2}$ at $n$ general points.
If $d>0$, $d\geq m_{1}+m_{2}+m_{3}$, $d\geq m_{1}\geq m_{2}\geq \cdots \geq m_{n}\geq 0$ and $m_{i}\leq 11$, then
the divisor $D$ satisfies
$$
h^{0}(\CO_{Y}(D))=\max\{0,\chi(\CO_{Y}(D))\}.
$$
\end{lem}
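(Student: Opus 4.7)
The plan is to prove the equality $h^{0}(\CO_{Y}(D)) = \max(0, \chi(\CO_{Y}(D)))$ by establishing higher cohomology vanishing in the non-special regime together with a non-effectiveness argument when $\chi(\CO_{Y}(D)) < 0$. This statement is an instance of the Segre--Harbourne--Gimigliano--Hirschowitz (SHGH) conjecture, which predicts precisely this behaviour for linear systems on blow-ups of $\PB^{2}$ at general points, and the hypotheses $d > 0$, $d \geq m_{1} + m_{2} + m_{3}$, and the monotonicity of the $m_{i}$ are exactly the numerical conditions that isolate the SHGH ``expected dimension'' case.

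First, I would verify that $D$ is nef on $Y$. The pairings $D \cdot E_{i} = m_{i} \geq 0$ and $D \cdot (H - E_{i} - E_{j}) = d - m_{i} - m_{j} \geq d - m_{1} - m_{2} \geq m_{3} \geq 0$ handle the exceptional divisors and the strict transforms of lines through two blown-up points. For irreducible $(-1)$-curves of higher degree, whose classes $aH - \sum b_{i} E_{i}$ satisfy the adjunction identities $\sum b_{i} = 3a - 1$ and $\sum b_{i}^{2} = a^{2} + 1$, a direct numerical estimate using the monotonicity $m_{1} \geq \cdots \geq m_{n}$ together with $d \geq m_{1} + m_{2} + m_{3}$ yields $D \cdot C = ad - \sum b_{i} m_{i} \geq 0$. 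Since the $11$ points are general, the Mori cone of $Y$ is controlled by these $(-1)$-curve classes, so the preceding estimates give nef-ness of $D$.

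Next, with $D$ nef and $D^{2} > 0$ (ensured by $d > 0$ and the hypotheses on the $m_{i}$), Kawamata--Viehweg vanishing gives $h^{i}(\CO_{Y}(D)) = 0$ for $i > 0$, so $h^{0}(\CO_{Y}(D)) = \chi(\CO_{Y}(D))$ whenever $\chi \geq 0$. In the remaining case $\chi(\CO_{Y}(D)) < 0$, I would show $h^{0}(\CO_{Y}(D)) = 0$ by contradiction: if $D$ were effective, then decomposing a non-zero section into irreducible components and using the monotonicity of the $m_{i}$, the bound $m_{i} \leq 11$, and the generality of the points forces a numerical inequality that contradicts $\chi(\CO_{Y}(D)) < 0$.

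The main obstacle is verifying nef-ness uniformly for all $(-1)$-curve classes on $Y$, because the blow-up of $\PB^{2}$ at $11$ general points carries infinitely many $(-1)$-curves. The bound $m_{i} \leq 11$ restricts the relevant classes to a finite list of numerical types, reducing the verification to a finite combinatorial check; alternatively, one may appeal to the known SHGH-type vanishing results for $n \leq 11$ with bounded multiplicity, which are also used in the analogous analyses of \cite{Kra24, KKL+25}.
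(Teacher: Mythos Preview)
Your approach has a genuine gap at the nef-ness step. You assert that ``the Mori cone of $Y$ is controlled by these $(-1)$-curve classes,'' but for the blow-up of $\PB^{2}$ at $n\geq 10$ general points this is an open problem, closely tied to Nagata's conjecture. Checking $D\cdot C\geq 0$ only for $(-1)$-curves therefore does not establish that $D$ is nef, and without nef-ness you cannot invoke Kawamata--Viehweg. There are two secondary issues as well: Kawamata--Viehweg requires $D-K_{Y}$ (not $D$) to be nef and big in order to conclude $h^{i}(\CO_{Y}(D))=0$; and the claim $D^{2}>0$ is not guaranteed by the hypotheses (e.g.\ $d=3$, all $m_{i}=1$, $n=11$ gives $D^{2}=-2$).

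The paper avoids these difficulties entirely. It quotes \cite[Theorem~34]{DW07}, which says that under the bound $m_{i}\leq 11$ one has the dichotomy: either $h^{0}(\CO_{Y}(D))=\max(0,\chi(\CO_{Y}(D)))$, or there exists a $(-1)$-curve $C$ with $C\cdot D\leq -2$. Then the inequality $C\cdot D\geq 0$ for every $(-1)$-curve $C$---precisely the computation you outline, and which is recorded in \cite[Proposition~1.4]{CM11} or \cite[Lemma~3.2]{Kra24}---rules out the second alternative. So your $(-1)$-curve estimate is correct and is used, but only to exclude the bad branch of the DW07 dichotomy, not to prove nef-ness of $D$ outright. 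The substantive SHGH input is the cited theorem of Dumnicki--Jarnicki, and this is exactly where the hypothesis $m_{i}\leq 11$ enters; your outline does not explain how this bound would be used in a direct vanishing argument.
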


\begin{proof}
Since $m_{i}\leq 11$, by \cite[Theorem 34]{DJ07},  we have
$$
h^{0}(\CO_{Y}(D))=\max\{0,\chi(\CO_{Y}(D))\}
$$ 
or there exists a $(-1)$-curve $C \subset Y$ such that $C_{\cdot} D\leq -2$; 
namely, the SHGH conjecture holds for $D$.
Note that $d\geq m_{1}+m_{2}+m_{3}$ and $d\geq m_{1}\geq m_{2}\geq \cdots \geq m_{n}\geq 0$.
Suppose $C$ is a $(-1)$-curve. 
By \cite[Proposition 1.4]{CM11} or \cite[Lemma 3.2]{Kra24}, 
we have $C_{\cdot} D\geq 0$.
This concludes the proof.
\end{proof}


\section{Construction of numerically exceptional collections}\label{sec-3-construct}

Let $Y$ be a smooth complex projective surface.
For any object $A,B\in \DC(Y)$,
the Euler characteristic
$$
\chi(A,B):=\sum_{i} (-1)^{i} \dim \Ext^{i}(A,B).
$$

\begin{defn}
A sequence $\{A_{0},A_{1},\cdots,A_{l}\}$ of objects in $\DC(Y)$  is called a {\it numerically exceptional collection} if $\chi(A_{i},A_{i})=1$ for all $0\leq i\leq l$ and $\chi(A_{j},A_{i})=0$ for all $0 \leq i<j\leq l$.
Moreover, a numerically exceptional collection is said to be {\it of maximal length} if it spans the numerical Grothendieck group of $Y$.
Here, the numerical Grothendieck group is the quotient of the  Grothendieck group by the
kernel of the Euler form $\chi$.
\end{defn}

We are mainly interested in numerically exceptional collections of line bundles.
A sequence of line bundles $\{A_{0},A_{1}, \cdots, A_{l}\}$ is a numerically exceptional collection if and only if $\chi(\CO_{Y})=1$ and $\chi(A_{i}\otimes A_{j}^{-1})=0$ for all $ i<j$.
For any  divisor $D$ on $Y$, the Euler characteristic is given by the Riemann--Roch formula
\begin{equation*}\label{RR-formula}
\chi(\CO_{Y}(D))=\frac{1}{2}D\cdot (D-K_{Y})+\chi(\CO_{Y}).
\end{equation*}
In particular, if $\chi(\CO_{Y})=1$, then $\chi(\CO_{Y}(D))=0$ if and only if 
\begin{equation}\label{partial-RR-formula}
D\cdot (D-K_{Y})=-2.
\end{equation}

Now suppose that $Y$ is the blow-up of  $\PB^{2}$ in $n$ closed points $p_{i}$, where $n\geq 10$.
By Orlov's blow-up formula \cite{Orl92} and mutations,  
there exists a full exceptional collection of line bundles of length $n+3$
\begin{equation}\label{EFC-on-blowupP2}
\DC(Y)=\langle \CO_{Y}, \CO_{Y}(E_{1}),\cdots,\CO_{Y}(E_{n}), \CO_{Y}(H), \CO_{Y}(2H) \rangle. 
\end{equation}
We consider the  divisors 
$$
D_{i}:=aK_{Y}-E_{i}\, \textrm{ and} \; F:=bK_{Y}-H,
$$
where $1\leq i\leq n$ and $a,b\in \ZN$.
Since $\chi(\CO_{Y})=1$ and $\chi(\CO_{Y}(E_{i}-E_{j}))=0$ for all $i\neq j$,
the sequence of line bundles
$$
\{\CO_{Y},\CO_{Y}(D_{1}),\cdots,\CO_{Y}(D_{n}),\CO_{Y}(F),\CO_{Y}(2F)\}
$$
is a numerically exceptional collection if and only if
$\chi(\CO_{Y}(-D_{i}))=0$, 
$\chi(\CO_{Y}(-F))=0$,
$\chi(\CO_{Y}(-2F))=0$,
$\chi(\CO_{Y}(D_{i}-F))=0$ and
$\chi(\CO_{Y}(D_{i}-2F))=0$ for all $1\leq i \leq n$.
Equivalently,
by \eqref{partial-RR-formula},
the following equations hold:
\begin{align}
a(a+1)(9-n)+2a&=-2,\label{num-exceptional-RR-equ-1} \\
b(b+1)(9-n)+6b&=-6, \label{num-exceptional-RR-equ-2}\\
b(2b+1)(9-n)+12b&=-6,\label{num-exceptional-RR-equ-3}\\
(a-b)(a-b-1)(9-n)-4(a-b)&=-4, \label{num-exceptional-RR-equ-4}\\
(a-2b)(a-2b-1)(9-n)-10(a-2b)&=-10. \label{num-exceptional-RR-equ-5}
\end{align}
By \eqref{num-exceptional-RR-equ-2} and \eqref{num-exceptional-RR-equ-3}, we get
\begin{equation}\label{important-equ-6}
    b(9-n)=-6.
\end{equation}
It follows that $n\in \{10,11,12,15\}$.
If $n=12$, by \eqref{num-exceptional-RR-equ-1} and \eqref{important-equ-6}, 
we obtain $a=-1$ and $b=2$; this contradicts \eqref{num-exceptional-RR-equ-4}.
If $n=15$, then by \eqref{num-exceptional-RR-equ-1} and \eqref{important-equ-6}, we have $a=-1$ and $b=1$, a contradiction to \eqref{num-exceptional-RR-equ-4}.
Hence, there are only two cases satisfying \eqref{num-exceptional-RR-equ-1}-\eqref{num-exceptional-RR-equ-5}:
\begin{enumerate}
\item[(1)] If $n=10$, then $a=2$ and $b=6$. 
In fact, this case is the example of Krah \cite{Kra24}.
\item[(2)] If $n=11$, then $a=1$ and $b=3$. As far as we know, this case is new.
\end{enumerate}

In summary, we obtain a new numerically exceptional collection of maximal length on the blow-up $Y$ of  $\PB^{2}$ in $11$ closed points.
Consider the divisors
\begin{equation}\label{main-construct-divisors}
D_{i}:=K_{Y}-E_{i} \textrm{ and } F:=3K_{Y}-H,
\end{equation}
where $1\leq i\leq 11$ and $K_{Y}=-3H+\sum_{i=1}^{11}E_{i}$.
By \eqref{EFC-on-blowupP2}, the Grothendieck  group $K_{0}(Y)\cong \ZN^{14}$.
Thus, we obtain:

\begin{prop}\label{main-num-exceptional-coll}
The sequence
\begin{equation*}
\{\CO_{Y},\CO_{Y}(D_{1}),\cdots,\CO_{Y}(D_{11}),\CO_{Y}(F),\CO_{Y}(2F)\}
\end{equation*}
is a numerically exceptional collection of line bundles of maximal length.
\end{prop}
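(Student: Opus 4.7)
The plan is to verify the two defining conditions of a numerically exceptional collection by a direct Riemann--Roch computation using the divisors $D_i = K_Y - E_i$ and $F = 3K_Y - H$ from \eqref{main-construct-divisors}, and then deduce maximal length from a rank count. Since $Y$ is a smooth rational surface, $\chi(\CO_Y) = 1$, hence $\chi(\CO_Y(M), \CO_Y(M)) = \chi(\CO_Y) = 1$ for any divisor $M$; the self-$\chi$ condition is therefore automatic. It remains to verify $\chi(A_j, A_i) = 0$ for $0 \le i < j \le 13$, which for line bundles $A_k = \CO_Y(M_k)$ reads $\chi(\CO_Y(M_i - M_j)) = 0$, equivalently by \eqref{partial-RR-formula} as $(M_i - M_j) \cdot (M_i - M_j - K_Y) = -2$.

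Next I would organize the $\binom{14}{2}$ pairs $(i,j)$ into two families. The first family comprises all pairs producing one of the five types $\chi(\CO_Y(-D_i))$, $\chi(\CO_Y(-F))$, $\chi(\CO_Y(-2F))$, $\chi(\CO_Y(D_i - F))$, $\chi(\CO_Y(D_i - 2F))$; these are exactly governed by \eqref{num-eceptional-RR-equ-1}--\eqref{num-eceptional-RR-equ-5} upon substituting $(a,b,n) = (1,3,11)$, and each becomes a one-line arithmetic check (for instance \eqref{num-eceptional-RR-equ-4} reads $(-2)(-3)(-2) - 4(-2) = -4$). Note that the extra pair $(i,j)=(12,13)$ reduces again to $\chi(\CO_Y(-F))$, so no new condition appears. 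The second family consists of the cross pairs $\chi(\CO_Y(E_j - E_i))$ for $1 \le i < j \le 11$; these vanish because $(E_j - E_i)^2 = -2$ and $(E_j - E_i) \cdot K_Y = 0$, an observation already invoked in the discussion preceding \eqref{num-eceptional-RR-equ-1}.

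For maximal length, I would combine the full exceptional collection \eqref{EFC-on-blowupP2} with the fact that for a smooth rational surface the Euler pairing is non-degenerate on $K_0$, so $K_0(Y) \cong \ZN^{14}$ already coincides with the numerical Grothendieck group. The proposed collection has exactly $14$ terms, and its Gram matrix with respect to $\chi$ is upper triangular with $1$'s on the diagonal by what was just verified, hence unimodular over $\ZN$. Therefore the $14$ classes form a $\ZN$-basis of the numerical Grothendieck group, establishing maximality. There is no genuine obstacle in this verification; the substantive work was already carried out in the case analysis of \eqref{num-eceptional-RR-equ-1}--\eqref{important-equ-6} that pinpointed $(a,b,n) = (1,3,11)$ as the new solution.
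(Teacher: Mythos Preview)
Your proposal is correct and follows essentially the same route as the paper: the paper's argument for Proposition~\ref{main-num-exceptional-coll} is the discussion immediately preceding it, which reduces numerical exceptionality to the vanishing of $\chi(\CO_Y(E_i-E_j))$ together with the five equations \eqref{num-eceptional-RR-equ-1}--\eqref{num-eceptional-RR-equ-5} evaluated at $(a,b,n)=(1,3,11)$, and derives maximal length from $K_0(Y)\cong\ZN^{14}$ via \eqref{EFC-on-blowupP2}. Your only addition is spelling out the unimodularity of the Gram matrix, which makes the maximality step slightly more explicit but is not a different idea.
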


\begin{rem}
If we only require $n\geq 3$, then there exist two more solutions satisfying \eqref{num-exceptional-RR-equ-1}-\eqref{num-exceptional-RR-equ-5}:
\begin{enumerate}
\item[(1)] If $n=7$, we have the following numerically exceptional collection of line bundles of maximal length
\begin{equation}\label{num-exceptional-coll-n=7}
\{\CO_{Y},\CO_{Y}(D_{1}),\ldots,\CO_{Y}(D_{7}),\CO_{Y}(F),\CO_{Y}(2F)\},
\end{equation}
where $D_{i}=-K_{Y}-E_{i}$ for $1\leq i\leq 7$ and $F=-3K_{Y}-H$.
\item[(2)] If $n=8$, we have the following numerically exceptional collection of line bundles of maximal length 
\begin{equation}\label{num-exceptional-coll-n=8}
\{ \CO_{Y},\CO_{Y}(D_{1}),\ldots,\CO_{Y}(D_{8}),\CO_{Y}(F),\CO_{Y}(2F)\},
\end{equation}
where $D_{i}=-2K_{Y}-E_{i}$ for $1\leq i\leq 8$ and $F=-6K_{Y}-H$.
\end{enumerate}
In the above two cases, when $Y$ is a del Pezzo surface, two sequences \eqref{num-exceptional-coll-n=7} and \eqref{num-exceptional-coll-n=8} are full exceptional collections (see \cite[Theorem 1.4]{EL16}). 
Hence, a natural question arises:
{\it are the sequences \eqref{num-exceptional-coll-n=7} and \eqref{num-exceptional-coll-n=8} non-full exceptional collections for $Y$ being weak del Pezzo but not del Pezzo}? Notably, it was conjectured that there are no phantoms  on these weak del Pezzo surfaces (see \cite[Conjecture 1.3]{BK25}).
In Appendix \ref{No-Ph-Wdel}, we confirm Borisov--Kemboi's conjecture for weak del Pezzo surfaces.
Thus,  both the sequences \eqref{num-exceptional-coll-n=7} and \eqref{num-exceptional-coll-n=8} cannot be non-full exceptional collections.
\end{rem}


\section{Proof of Theorem \ref{main-thm}}\label{sec-4-pf-thm1}

Let $Y$ be the blow-up of $\PB^{2}$ along $11$ general points.
By Proposition \ref{main-num-exceptional-coll}, there is a numerically exceptional collection of line bundles of maximal length
\begin{equation}\label{num-exceptional-collection}
\{\CO_{Y},\CO_{Y}(D_{1}), \cdots,\CO_{Y}(D_{11}),\CO_{Y}(F),\CO_{Y}(2F)\},
\end{equation}
where the divisors $D_{i}$ and $F$ are defined as in \eqref{main-construct-divisors}.

\begin{lem}\label{excep-coll}
The sequence \eqref{num-exceptional-collection} is an exceptional collection.
\end{lem}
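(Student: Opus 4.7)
\medskip

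\noindent\textbf{Proof proposal.} Since the sequence is already numerically exceptional by Proposition \ref{main-num-exceptional-coll}, the Euler characteristic of each relevant Hom-complex vanishes. Moreover, every line bundle $\CO_Y(L)$ on a smooth projective surface with $h^0(\CO_Y)=1$ and $h^{>0}(\CO_Y)=0$ is automatically exceptional, so the entire burden is to verify the semi-orthogonality:
$$
H^k\bigl(Y,\CO_Y(L_i-L_j)\bigr)=0 \quad \text{for all } k\in\ZN \text{ and all } i<j,
$$
where $L_0=0$, $L_1=D_1,\dots,L_{11}=D_{11}$, $L_{12}=F$, $L_{13}=2F$. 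The strategy is to combine Lemma \ref{SHGH-conj-lemma} (to kill $h^0$) with Serre duality plus the projection formula (to kill $h^2$), and then use the already-established vanishing of $\chi$ to deduce $h^1=0$.

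\medskip

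First I would enumerate the classes $M:=L_i-L_j$ that need to be treated. Up to symmetry these are $-D_i$, $E_j-E_i$ (for $i\ne j$), $D_i-F$, $D_i-2F$, $-F$, and $-2F$. For each such $M$ with a strictly positive $H$-coefficient (namely $-D_i$, $D_i-F$, $D_i-2F$, $-F$, $-2F$), I would sort the multiplicities in non-increasing order and check the inequalities of Lemma \ref{SHGH-conj-lemma}: positivity of the degree, the triangle-type bound $d\geq m_1+m_2+m_3$, and $m_i\leq 11$. In every one of these cases the inequalities hold with room to spare (for instance $-2F=20H-6\sum E_k$ gives $d=20\geq 18$, and $D_i-2F=17H-6E_i-5\sum_{k\ne i}E_k$ gives $17\geq 16$), so Lemma \ref{SHGH-conj-lemma} yields $h^0(\CO_Y(M))=\max(0,\chi)=0$. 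The remaining classes $E_j-E_i$ have $H$-degree zero; here I would argue by hand that no effective divisor lies in such a class, because any effective representative with zero $H$-degree is a non-negative combination of exceptional curves, ruling out a negative coefficient on $E_i$.

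\medskip

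For $h^2$, I would invoke Serre duality $h^2(\CO_Y(M))=h^0(\CO_Y(K_Y-M))$ and then show that each class $K_Y-M$ has a strictly negative $H$-coefficient together with non-negative coefficients on all exceptional divisors $E_k$. A direct computation, for example $K_Y-(-2F)=-23H+7\sum E_k$ or $K_Y-(D_i-2F)=-20H+6\sum E_k+E_i$, confirms this shape. Then the projection formula gives
$$
\pi_{*}\CO_Y(K_Y-M) \;=\; \mathcal{O}_{\PB^{2}}(d)\otimes \pi_{*}\CO_Y\!\Bigl(\textstyle\sum c_k E_k\Bigr) \;=\; \mathcal{O}_{\PB^{2}}(d)
$$
with $d<0$, since $\pi_{*}\CO_Y(\sum c_k E_k)=\CO_{\PB^{2}}$ whenever all $c_k\geq 0$. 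Hence $h^0(\CO_Y(K_Y-M))=h^0(\PB^{2},\mathcal{O}(d))=0$, and therefore $h^2(\CO_Y(M))=0$. Combining with $h^0=0$ and $\chi=0$ forces $h^1=0$ as well.

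\medskip

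The main obstacle, and the reason the $11$-point case is delicate, is that the SHGH-type inequality $d\geq m_1+m_2+m_3$ saturates in several cases (e.g.\ $D_i-F$ gives $7\geq 3+2+2$); any tighter configuration of multiplicities would fall outside the scope of Lemma \ref{SHGH-conj-lemma} and would force a genuine $(-1)$-curve analysis. Verifying that each of the roughly half-dozen distinct Chern character types fits exactly into the hypotheses of Lemma \ref{SHGH-conj-lemma}, and that every Serre-dual class has the required non-negative exceptional part, is the core bookkeeping step, but no case requires any input beyond the two ingredients above.
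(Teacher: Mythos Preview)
Your proposal is correct and follows essentially the same route as the paper: reduce via numerical exceptionality to checking $h^0$ and $h^2$, kill $h^0$ with Lemma~\ref{SHGH-conj-lemma}, and kill $h^2$ via Serre duality. The only cosmetic differences are that the paper disposes of $h^2$ more directly by observing that each $K_Y-M$ has negative intersection with the nef class $H$ (hence is not effective) rather than invoking the projection formula, and treats the $E_j-E_i$ case by citing the known exceptional pair $\{\CO_Y(E_i),\CO_Y(E_j)\}$ instead of arguing by hand.
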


\begin{proof}
Based on Proposition \ref{main-num-exceptional-coll}, it is sufficient to verify that the vanishing of $\Hom$-spaces and $\Ext^{2}$-spaces.
By Serre duality, we have
\begin{align*}
\Ext^{2}(\CO_{Y}(D_{i}),\CO_{Y})&  \cong H^{0}(\CO_{Y}(2K_{Y}-E_{i})),\\
\Ext^{2}(\CO_{Y}(F),\CO_{Y})& \cong H^{0}(\CO_{Y}(4K_{Y}-H)),\\
\Ext^{2}(\CO_{Y}(2F),\CO_{Y})& \cong H^{0}(\CO_{Y}(7K_{Y}-2H)),\\
\Ext^{2}(\CO_{Y}(2F),\CO_{Y}(F))& \cong H^{0}(\CO_{Y}(4K_{Y}-H)),\\
\Ext^{2}(\CO_{Y}(F),\CO_{Y}(D_{i}))&\cong H^{0}(\CO_{Y}(3K_{Y}+E_{i}-H)),\\
\Ext^{2}(\CO_{Y}(2F),\CO_{Y}(D_{i}))& \cong H^{0}(\CO_{Y}(6K_{Y}+E_{i}-2H)).
\end{align*}
All the above divisors intersect $H$ negatively, so  all $\Ext^{2}$-spaces vanish.
Since $\{\CO_{Y}(E_{i}),\CO_{Y}(E_{j})\}$ is an exceptional collection, we have
$$
\Ext^{k}(\CO_{Y}(D_{j}),\CO_{Y}(D_{i}))=\Ext^{k}(\CO_{Y}(E_{i}),\CO_{Y}(E_{j}))=0
$$ 
for $k=0,2$.
The remaining cases are 
\begin{align*}
\Hom(\CO_{Y}(D_{i}),\CO_{Y})&=H^{0}(\CO_{Y}(3H-\sum_{i=1}^{11}E_{i}+E_{i})),\\
\Hom(\CO_{Y}(F),\CO_{Y})&=H^{0}(\CO_{Y}(10H-3\sum_{i=1}^{11}E_{i})),\\
\Hom(\CO_{Y}(2F),\CO_{Y})&=H^{0}(\CO_{Y}(20H-6\sum_{i=1}^{11}E_{i})),\\
\Hom(\CO_{Y}(2F),\CO_{Y}(F))&=H^{0}(\CO_{Y}(10H-3\sum_{i=1}^{11}E_{i})),\\
\Hom(\CO_{Y}(F),\CO_{Y}(D_{i}))&=H^{0}(\CO_{Y}(7H-2\sum_{i=1}^{11}E_{i}-E_{i})),\\
\Hom(\CO_{Y}(2F),\CO_{Y}(D_{i}))&=H^{0}(\CO_{Y}(17H-5\sum_{i=1}^{11}E_{i}-E_{i})).\\
\end{align*}
Note that the divisors in the remaining cases satisfy the conditions in Lemma \ref{SHGH-conj-lemma}. Thus, all $\Hom$-spaces vanish.
This completes the proof of Lemma \ref{excep-coll}.
\end{proof}

\begin{lem}\label{excep-coll3}
The height of the exceptional collection  \eqref{num-exceptional-collection} is $3$.
In particular, the sequence \eqref{num-exceptional-collection} is not full.
\end{lem}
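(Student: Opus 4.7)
The plan is to compute $h(\mathbb{E}) = \ph(\mathbb{E}) = 3$ by showing the pseudoheight in Definition \ref{pseudoheight-defn} is attained at a singleton term ($p = 0$); Lemma \ref{h=ph} then yields $h = \ph$, and Lemma \ref{not-full-criterion} gives non-fullness. The two subgoals are (A) each singleton term equals $3$, and (B) every chain with $p \geq 1$ contributes at least $3$.

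For (A), the singleton contribution at any line bundle $L$ in the collection equals $e(L, L \otimes \omega_Y^{-1}) + 2 = \inf\{k : H^k(-K_Y) \neq 0\} + 2$. I would verify $h^0(-K_Y) = h^0(3H - \sum_{j=1}^{11} E_j) = 0$ (since $11$ general points impose independent conditions on the $9$-dimensional linear system $|3H|$ on $\PB^2$), $h^2(-K_Y) = h^0(2K_Y) = 0$ (negative $H$-degree), and $\chi(-K_Y) = K_Y^2 + 1 = -1$ by Riemann--Roch; hence $h^1(-K_Y) = 1$ and the singleton value is $3$.

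For (B), the contribution of a chain $a_0 < \cdots < a_p$ is $\sum_{i=0}^{p-1} e(E_{a_i}, E_{a_{i+1}}) + e(E_{a_p}, E_{a_0} \otimes \omega_Y^{-1}) - p + 2$, and it suffices that each of the $p + 1$ relative heights is $\geq 1$. For the intermediate terms, $\Hom(E_{a_i}, E_{a_{i+1}}) = H^0(E_{a_{i+1}} - E_{a_i})$, and a quick inspection shows every such difference is either a divisor of negative $H$-degree or of the form $E_i - E_j$ (with $i < j$), which is never effective on the generic $Y$. For the wrap-around term I need $H^0(E_{a_0} - E_{a_p} - K_Y) = 0$, which I would verify by case analysis on the types of $E_{a_0}, E_{a_p}$. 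Whenever the divisor satisfies the hypothesis $d \geq m_1 + m_2 + m_3$ of Lemma \ref{SHGH-conj-lemma}, a direct Riemann--Roch computation gives $\chi < 0$, hence $h^0 = 0$. This covers every pair except $(E_{a_0}, E_{a_p}) = (\CO_Y(D_i), \CO_Y(D_j))$ with $i < j$.

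The main obstacle is this last case, where the divisor $D = -K_Y + E_j - E_i = 3H - 2E_i - \sum_{k \neq i, j} E_k$ has $d = 3 < 4 = m_1 + m_2 + m_3$, so Lemma \ref{SHGH-conj-lemma} does not apply. To finish, I would exhibit a $(-1)$-curve $C = 4H - 3E_i - \sum_{l \in S} E_l$ with $|S| = 8$ and $S \subseteq \{1, \ldots, 11\} \setminus \{i, j\}$, which is represented by an irreducible curve for $11$ general points; then $C \cdot D = -2$ forces $C$ into the base locus of $|D|$ with multiplicity at least $2$, and the residual $D - 2C$ has $H$-degree $-5 < 0$, so $h^0(D) = h^0(D - 2C) = 0$. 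This completes (B), giving $\ph(\mathbb{E}) = 3$ attained at a singleton, and Lemmas \ref{h=ph} and \ref{not-full-criterion} then deliver $h(\mathbb{E}) = 3$ and non-fullness.
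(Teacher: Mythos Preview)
Your overall strategy matches the paper's: show the singleton terms all give $3$ and longer chains give $\geq 3$, then invoke Lemma~\ref{h=ph} and Lemma~\ref{not-full-criterion}. The paper in fact computes the forward relative heights $e(L_i,L_j)$ exactly (they are all $2$, or $\infty$ when both objects lie among the $\CO_Y(D_i)$) rather than merely bounding them below by $1$, but your weaker bound already forces every $p\geq 1$ chain to contribute $\geq (p+1)-p+2=3$, so this is only a matter of taste.

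The one substantive difference is your treatment of the wrap-around term for the pair $(\CO_Y(D_i),\CO_Y(D_j))$, where $D=-K_Y+E_j-E_i$ falls outside the range of Lemma~\ref{SHGH-conj-lemma}. You peel off a $(-1)$-class $C=4H-3E_i-\sum_{l\in S}E_l$ twice; this works, but you assert without proof that $C$ is represented by an \emph{irreducible} curve for $11$ general points. That can be checked (two standard Cremona transformations reduce $C$ to a conic through five of the points), but it is an extra step you would need to supply. The paper's argument here is shorter: from the exact sequence
\[
0 \longrightarrow \CO_Y(-K_Y+E_j-E_i) \longrightarrow \CO_Y(-K_Y+E_j) \longrightarrow \CO_{E_i}(-K_Y+E_j) \longrightarrow 0
\]
one gets $H^0(D)\hookrightarrow H^0(-K_Y+E_j)$, and the divisor $-K_Y+E_j=3H-\sum_{k\neq j}E_k$ \emph{does} satisfy the hypotheses of Lemma~\ref{SHGH-conj-lemma} (with $d=3=1+1+1$) and has $\chi=0$, so the target vanishes. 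This avoids any irreducibility claim and stays within the SHGH-bounded regime throughout.
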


\begin{proof}
Set $L_{0}:=\CO_{Y}$,
$L_{1}:=\CO_{Y}(D_{1}),\cdots,L_{11}:=\CO_{Y}(D_{11})$,
$L_{12}:=\CO_{Y}(F)$,
$L_{13}:=\CO_{Y}(2F)$.
First, we calculate the relative height $e(L_{i},L_{j})$ for all $i<j$. 
We have three cases.
(1) The relative height $e(L_{0},L_{i})=2$ for $1\leq i\leq 11$. In fact, since $D_{i}\cdot H<0$, so  $\homd(L_{0},L_{i})=0$.
Since $E_{i}$ is a $(-1)$-curve,
by Serre duality, 
we obtain 
$\ext^{2}(L_{0},L_{i})=1$.
By Riemann--Roch formula, we have $\ext^{1}(L_{0},L_{i})=1-\chi(L_{0},L_{i})=0$.
(2) The relative height $e(L_{i},L_{j})=\infty$ for $1\leq i<j\leq 11$, since $\{E_{i}$, $E_{j}\}$ is an exceptional pair. 
(3) The relative height $e(L_{i},L_{j})=2$,
for $(L_{i},L_{j})\neq (\CO_{Y}(D_{i}),\CO_{Y}(D_{j}))$ and $(L_{i},L_{j})\neq (\CO_{Y},\CO_{Y}(D_{j}))$.
Since $c_{1}(L_{j}\otimes L_{i}^{-1})\cdot H<0$, we get $\Hom(L_{i},L_{j})=0$.
By Riemann--Roch formula, we derive $\chi(L_{i},L_{j})>0$.
By Serre duality and Lemma \ref{SHGH-conj-lemma}, we deduce
$
\ext^{2}(L_{i},L_{j})=\chi(L_{i},L_{j}).
$
It follows that $\ext^{1}(L_{i},L_{j})=0$.
Thus, $e(L_{i},L_{j})=2$.

Next, we compute the relative height $e(L_{j},L_{i}\otimes\omega_{Y}^{-1})$ for $i<j$. 
By Riemann--Roch formula, we have
$\chi(L_{j},L_{i}\otimes\omega_{Y}^{-1})<0$ for $i<j$.
Since $c_{1}(L_{j}\otimes L_{i}^{-1}\otimes \omega_{Y}^{\otimes 2}){\cdot}H<0$, by Serre duality,  we obtain $\Ext^{2}(L_{j},L_{i}\otimes \omega_{Y}^{-1})\cong H^{0}(L_{j}\otimes L_{i}^{-1}\otimes \omega_{Y}^{\otimes 2})=0$.
We discuss two cases:
(i) The relative height $e(L_{j},L_{i}\otimes\omega_{Y}^{-1})=1$ for $1\leq i<j\leq 11$. 
We have the short exact sequence
\begin{equation}\label{excep-coll-not-full-SES-1}
\xymatrix@C=0.5cm{
0 \ar[r] & L_{i}\otimes L_{j}^{-1}\otimes \omega_{Y}^{-1} \ar[r] & L_{j}^{-1} \ar[r] & \CO_{E_{i}}(-K_{Y}+E_{j}) \ar[r] & 0.
} 
\end{equation}
Since $H^{0}(L_{j}^{-1})=H^{0}(\CO_{Y}(-K_{Y}+E_{j}))=0$, so \eqref{excep-coll-not-full-SES-1} implies $\Hom(L_{j},L_{i}\otimes \omega_{Y}^{-1} )=0$. 
Hence, $\ext^{1}(L_{j},L_{i}\otimes\omega_{Y}^{-1})=-\chi(L_{j},L_{i}\otimes\omega_{Y}^{-1})>0$.
(ii) The relative height $e(L_{j},L_{i}\otimes \omega_{Y}^{-1})=1$ for $(L_{i},L_{j})\neq (\CO_{Y}(D_{i}),\CO_{Y}(D_{j}))$.
It follows from Lemma \ref{SHGH-conj-lemma} that $\Hom(L_{j},L_{i}\otimes \omega_{Y}^{-1})\cong H^{0}(L_{j}^{-1}\otimes L_{i}\otimes \omega_{Y}^{-1})=0$.
Since $c_{1}(L_{j}\otimes L_{i}^{-1}\otimes \omega_{Y}^{\otimes 2}){\cdot}H<0$, by Serre duality,  we obtain $\Ext^{2}(L_{j},L_{i}\otimes \omega_{Y}^{-1})\cong H^{0}(L_{j}\otimes L_{i}^{-1}\otimes \omega_{Y}^{\otimes 2})=0$.
Thus, we derive $\ext^{1}(L_{j},L_{i}\otimes\omega_{Y}^{-1})=-\chi(L_{j},L_{i}\otimes\omega_{Y}^{-1})>0$.

Finally, following the same arguments as (ii), we have
$e(L_{i},L_{i}\otimes\omega_{Y}^{-1})=1$ for all $i$.
By definition, the pseudoheight of \eqref{num-exceptional-collection} is $3$.
Since the relative height $e(L_{i},L_{i}\otimes \omega_{Y}^{-1}[-2])=3$, 
it follows from Lemma \ref{h=ph} that the height of \eqref{num-exceptional-collection} is $3$.
In particular, by Lemma \ref{not-full-criterion}, the sequence \eqref{num-exceptional-collection} is not full.
\end{proof}

\begin{thm}\label{our-universal-phantom}
The right orthogonal complement of the sequence \eqref{num-exceptional-collection},
$$
\A_{Y}:=\langle \CO_{Y},\CO_{Y}(D_{1}), \cdots,\CO_{Y}(D_{11}),\CO_{Y}(F),\CO_{Y}(2F) \rangle^{\perp} \subset \DC(Y)
$$
is a universal phantom subcategory.
\end{thm}

\begin{proof}
By Lemma \ref{orth-compl-ph}, the admissible subcategory $\A_{Y}$ is a phantom category.
We use Manin's blow-up formula of Chow motive \cite[Section 9]{Ma68} to get $M(Y) = \mathbf{1}\oplus \mathbb {L}^{\oplus 12} \oplus \mathbb{L}^{\otimes 2}$. 
Therefore, the Chow motive $M(Y)$ has Lefschetz type. The result follows from Proposition \ref{prop:univ_phantom}.
\end{proof}

\begin{rem}
(1) The height of the non-full exceptional collection constructed in \cite{Kra24} is $4$; in particular, the formal deformation space of Krah's phantom category is isomorphic to that of the derived category of the base rational surface (see \cite[Remark 5.5]{Kra24}). 
However, the height of our non-full exceptional collection \ref{num-exceptional-collection} is $3$.
It is unknown whether the formal deformation spaces of $\DC(Y)$ and $\A_{Y}$ are isomorphic.

(2) It was proved that the second Hochschild cohomology $\mathrm{HH}^{2}(\A_{Y})$ of dimension is at least $\dim \mathrm{HH}^{2}(Y)=14$ (see \cite[Proposition 3.5]{KKL+26}).
In particular, the phantom category $\A_{Y}$ is not equivalent to Krah's phantom category (see \cite[Remark 3.6]{KKL+26}).  Moreover, we will show that $\dim \mathrm{HH}^{2}(\A_{Y})=27$ (see Proposition \ref{normal-Hoch-cohomology-Y}).

(3) Let $X$ be the Hilbert scheme of $n$ points on $Y$. 
According to \cite[Theorem 3.4]{Kos22}, 
for each $1 \leq i\leq n$, the symmetric product ${\rm Sym}^{i}(\A_{Y})\subset \DC(X)$ is a phantom admissible category. 
By \cite[Theorem 4.1]{Tot20}, the Chow motive $M(X)$ is of Lefschetz type.
Then, by Proposition \ref{prop:univ_phantom}, 
${\rm Sym}^{i}(\A_{Y})$ is a universal phantom category for $1\leq i\leq n$.
\end{rem}

 
\section{A co-connective DG-algebra and projected skyscraper sheaves}\label{sec-5-pf-thm2}

In this section, we construct a new co-connective DG-algebra whose derived category is a phantom category and obtain some non-trivial objects in the phantom category by projecting the skyscraper sheaves.

To begin with, we set the ordered sequence
\begin{equation}\label{new-equ-coll}
\{\mathcal{E}_{1},\mathcal{E}_{2},\cdots,\mathcal{E}_{14}\}:=\{\CO_{Y}(-2F),\CO_{Y}(-F),\CO_{Y}(-D_{1}),\ldots,\CO_{Y}(-D_{11}),\CO_{Y}\}
\end{equation}
whose objects are the duals of those in \eqref{num-exceptional-collection}.
By Lemmas \ref{excep-coll} and \ref{excep-coll3}, 
we have the following two lemmas:

\begin{lem}\label{strong-generator-lem-1}
The sequence \eqref{new-equ-coll} is a non-full exceptional collection.
\end{lem}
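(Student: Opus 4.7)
The plan is to exploit the derived dualization functor $\mathbb{D}:=\RCH(-,\CO_{Y})$, which is a contravariant auto-equivalence of $\DC(Y)$ since $Y$ is smooth and projective. Applying $\mathbb{D}$ term-by-term to the reverse of \eqref{num-exceptional-collection} yields the sequence $\{\CO_{Y}(-2F),\CO_{Y}(-F),\CO_{Y}(-D_{11}),\ldots,\CO_{Y}(-D_{1}),\CO_{Y}\}$, which differs from \eqref{new-equ-coll} only by reordering the eleven line bundles $\CO_{Y}(-D_{i})$ among themselves.

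First I would observe that this internal reordering is harmless. From the proof of Lemma \ref{excep-coll} one reads off that $\Ext^{k}(\CO_{Y}(D_{i}),\CO_{Y}(D_{j}))=0$ for every $k$ and every $i\neq j$ (the $\Hom$ and $\Ext^{2}$ vanishings are verified there, and $\Ext^{1}=0$ then follows from $\chi=0$). Dualizing, the same is true for the $\CO_{Y}(-D_{i})$, so these eleven sheaves form a block of mutually orthogonal exceptional objects in which any permutation is admissible.

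Second I would transport exceptionality across $\mathbb{D}$. For line bundles $A,B$ on $Y$ one has the standard identity $\Ext^{k}(\mathbb{D}A,\mathbb{D}B)\cong\Ext^{k}(B,A)$, so the semi-orthogonality of \eqref{num-exceptional-collection} (proved in Lemma \ref{excep-coll}) translates directly into the semi-orthogonality of its reversed dual, and hence, after the benign permutation of the $D$-block above, into that of \eqref{new-equ-coll}. Each $\mathcal{E}_{i}$ is a line bundle and is therefore trivially exceptional.

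Finally, non-fullness is inherited through $\mathbb{D}$: since $\mathbb{D}$ is an equivalence onto $\DC(Y)^{\rm op}$, it sends the triangulated subcategory generated by \eqref{num-exceptional-collection} to the one generated by \eqref{new-equ-coll}, and proper subcategories go to proper subcategories. By Lemma \ref{excep-coll3} the former is not all of $\DC(Y)$, so neither is the latter. The only (very mild) obstacle is the bookkeeping between the anti-equivalence and the reordering of the $\CO_{Y}(-D_{i})$-block, which is dispatched in the first step.
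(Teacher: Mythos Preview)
Your argument is correct and is precisely the mechanism the paper has in mind: the paper's own proof consists of the single sentence ``By Lemmas \ref{excep-coll} and \ref{excep-coll3}'', and your dualization argument is exactly how those two lemmas transfer to the reversed-dual sequence \eqref{new-equ-coll}. The only extra detail you supply --- that the $\CO_{Y}(-D_{i})$ form a completely orthogonal block and may be freely permuted --- is correct (it reduces to $H^{k}(\CO_{Y}(E_{i}-E_{j}))=0$ for $i\neq j$) and makes the deduction rigorous.
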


\begin{lem}\label{strong-generator-lem-2}
$\RHom(\mathcal{E}_{i},\mathcal{E}_{j})=\CN^{\chi(\mathcal{E}_{j}\otimes\mathcal{E}_{i}^{\vee})}[-2]$ for $1\leq i<j\leq 14$.
\end{lem}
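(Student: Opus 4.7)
The plan is to reduce the statement, via straightforward dualization, to the relative-height computations already performed in the proof of Lemma \ref{excep-coll3}. I would set $L_0 := \CO_Y$, $L_k := \CO_Y(D_k)$ for $1 \le k \le 11$, $L_{12} := \CO_Y(F)$, and $L_{13} := \CO_Y(2F)$, as in that proof, so that $\mathcal{E}_i = L_{14-i}^{\vee}$ for $1 \le i \le 14$. Since these are line bundles, one has $\RHom(\mathcal{E}_i, \mathcal{E}_j) \cong \RHom(L_{14-j}, L_{14-i})$, and when $i < j$ the indices on the right satisfy $14-j < 14-i$ --- exactly the forward direction in the original ordering analysed in Lemma \ref{excep-coll3}.

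Next I would revisit the three cases $(1)$--$(3)$ of that proof and extract not merely the relative height $e(L_a, L_b) \ge 2$ but the full $\RHom$-complex. In every case the arguments there already establish that $\homd(L_a, L_b) = \ext^1(L_a, L_b) = 0$ for $a < b$: case $(1)$ uses $D_b \cdot H < 0$ together with Serre duality on the $(-1)$-curve $E_b$; case $(2)$ uses that $\{\CO_Y(E_a), \CO_Y(E_b)\}$ is an exceptional pair, so all Ext groups vanish; case $(3)$ combines $c_1(L_b \otimes L_a^{-1}) \cdot H < 0$ (for $\homd = 0$) with Serre duality and Lemma \ref{SHGH-conj-lemma} (for $\ext^2 = \chi(L_a, L_b) > 0$), whereupon Riemann--Roch forces $\ext^1 = 0$. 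In all three cases $\RHom(L_a, L_b)$ is concentrated in degree $2$, with total dimension equal to $\chi(L_a, L_b)$ (which equals zero in case $(2)$).

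Combining the two steps gives, for $1 \le i < j \le 14$,
$$
\RHom(\mathcal{E}_i, \mathcal{E}_j) \;\cong\; \CN^{\chi(L_{14-j},\, L_{14-i})}[-2] \;=\; \CN^{\chi(\mathcal{E}_j \otimes \mathcal{E}_i^{\vee})}[-2],
$$
using the identity $\chi(L_{14-j}, L_{14-i}) = \chi(L_{14-j}^{\vee} \otimes L_{14-i}) = \chi(\mathcal{E}_j \otimes \mathcal{E}_i^{\vee})$. The only real obstacle is careful bookkeeping: one must verify that the three cases of Lemma \ref{excep-coll3} partition every ordered pair $a < b$ in the original exceptional collection (they do, since case $(3)$ is defined by excluding $(1)$ and $(2)$), and keep the index reversal $i \leftrightarrow 14 - i$ consistent throughout. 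No new geometric input beyond what already appears in Lemma \ref{excep-coll3} is needed.
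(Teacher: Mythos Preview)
Your proposal is correct and is precisely the intended argument: the paper does not give a separate proof of this lemma at all but simply states that it follows from Lemmas~\ref{excep-coll} and~\ref{excep-coll3}. Your write-up makes explicit the dualization $\RHom(\mathcal{E}_i,\mathcal{E}_j)\cong\RHom(L_{14-j},L_{14-i})$ and then reads off from cases~(1)--(3) of the proof of Lemma~\ref{excep-coll3} that all three $\Ext$-groups are already determined there (not merely the lowest nonvanishing one), with the result concentrated in degree~$2$ of dimension $\chi$; this is exactly what the paper leaves implicit.
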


We denote $\mathcal{P}_{Y}:=\langle\mathcal{E}_{1},\ldots,\mathcal{E}_{14}\rangle^{\perp}$ the  right orthogonal complement and $\iota:\mathcal{P}_{Y} \hookrightarrow \DC(Y)$ the natural inclusion functor.
Based on Lemma \ref{strong-generator-lem-1}, using the same argument as the proof of Theorem \ref{our-universal-phantom}, we can obtain:

\begin{prop}\label{P_Y-phantom}
 The admissible subcategory $\mathcal{P}_{Y}$ is a phantom category.   
\end{prop}

Moreover, there is an exact equivalence between two triangulated categories
$$
\Phi\circ \iota: \mathcal{P}_{Y}\rightarrow \A_{Y},
$$
where $\Phi:=R\CH om(-,\omega_{Y}):  \mathrm{D}^{b}(Y) \rightarrow  \mathrm{D}^{b}(Y)$.
Since $\A_{Y}$ is a phantom, this also implies that $\mathcal{P}_{Y}$ is a phantom (see Proposition \ref{P_Y-phantom}).

\subsection{Proof of Theorem \ref{main-cor}}
This subsection proves Theorem \ref{main-cor} by
using the strategy as that of \cite[Theorem 1.1]{Mat25}.
We denote 
$$
\{\mathcal{F}_{1},\ldots, \mathcal{F}_{14} \}:=\{ \CO_{Y},\CO_{Y}(E_{1}),\ldots,\CO_{Y}(E_{11}),\CO_{Y}(H),\CO_{Y}(2H)\}
$$
the full exceptional collection \eqref{EFC-on-blowupP2} and the object
$$
\mathcal{T}:=\bigoplus_{i=2}^{14}\iota^{\ast}\mathcal{F}_{i} \in \mathcal{P}_{Y},
$$
where $\iota^{\ast}:\DC(Y)\rightarrow \mathcal{P}_{Y}$ is the left adjoint functor of the natural inclusion functor $\iota$, 
i.e. $\iota^{\ast}$ is the left mutation functor $\mathrm{L}_{\langle\mathcal{E}_{1},\mathcal{E}_{2},\cdots,\mathcal{E}_{14} \rangle}$ through $\langle\mathcal{E}_{1},\mathcal{E}_{2},\cdots,\mathcal{E}_{14} \rangle$.
In particular, we have $\iota^{\ast} \CO_{Y}=0$.

\begin{thm}[Theorem \ref{main-cor}]\label{main-corv2}
The object $\mathcal{T}$ is a strong generator of $\mathcal{P}_{Y}$ and $A:=\RHom(\mathcal{T},\mathcal{T})$ is a co-connective DG-algebra.
In particular, the derived category of $A$ is a phantom.
\end{thm}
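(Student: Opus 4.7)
The plan proceeds in three stages: strong generation of $\mathcal{T}$, co-connectivity of $A$, and the phantom conclusion.

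\textbf{Strong generation.} Since $\{\mathcal{F}_1,\ldots,\mathcal{F}_{14}\}$ is the full exceptional collection of $\DC(Y)$ in \eqref{EFC-on-blowupP2}, the object $\bigoplus_i \mathcal{F}_i$ is a strong classical generator of $\DC(Y)$. The left adjoint $\iota^{\ast}\colon \DC(Y)\to \mathcal{P}_Y$ is exact, commutes with direct sums, and satisfies $\iota^{\ast}\iota\cong \mathrm{id}_{\mathcal{P}_Y}$ (because $\iota$ is fully faithful), so $\mathcal{T}=\bigoplus_i \iota^{\ast}\mathcal{F}_i$ inherits strong generation of $\mathcal{P}_Y$.

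\textbf{Co-connectivity.} The goal is $\Ext^k_{\mathcal{P}_Y}(\iota^{\ast}\mathcal{F}_i, \iota^{\ast}\mathcal{F}_j)=0$ for all $k>0$ and all $i,j$. By the adjunction, this equals $\Ext^k_{\DC(Y)}(\mathcal{F}_i, \iota\iota^{\ast}\mathcal{F}_j)$. The semi-orthogonal decomposition $\DC(Y)=\langle \mathcal{P}_Y, \mathcal{E}_1,\ldots,\mathcal{E}_{14}\rangle$ (dual to that of Theorem \ref{main-thm}) yields a projection triangle
\[
M_j \to \mathcal{F}_j \to \iota\iota^{\ast}\mathcal{F}_j \to M_j[1], \qquad M_j \in \langle \mathcal{E}_1,\ldots,\mathcal{E}_{14}\rangle.
\]
Applying $\RHom(\mathcal{F}_i, -)$ and its long exact sequence reduces the problem to two vanishings: (a) $\Ext^{>0}(\mathcal{F}_i, \mathcal{F}_j)=0$ for all $i,j$, and (b) $\Ext^{\ge 2}(\mathcal{F}_i, M_j)=0$. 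Part (a) follows from direct cohomology computations for the finitely many line bundles $\mathcal{F}_j\otimes \mathcal{F}_i^{-1}$ of low $H$-degree on $Y$; each case is verifiable by pushforward to $\PB^{2}$ or by Lemma \ref{SHGH-conj-lemma}.

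For (b), the object $M_j$ is the iterated right mutation of $\mathcal{F}_j$ through $\mathcal{E}_{14},\ldots,\mathcal{E}_1$, assembled from successive cones involving $\mathcal{E}_k\otimes \RHom(\mathcal{E}_k, -)$. By Lemma \ref{strong-generator-lem-2} the spaces $\RHom(\mathcal{E}_a,\mathcal{E}_b)$ are concentrated in degree $0$ or $2$, and a parallel direct computation (Serre duality plus the SHGH input used in Section 4) confines $\RHom(\mathcal{F}_i,\mathcal{E}_k)$ to degrees $\leq 1$. Tracking these bounded shifts through the 14-step mutation via a Beilinson-type spectral sequence or iterated-cone argument yields $\Ext^{\ge 2}(\mathcal{F}_i, M_j)=0$. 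The technical heart of the proof is precisely this bookkeeping: one must verify that no cumulative shift escapes the window dictated by the concentration of $\RHom(\mathcal{E}_a,\mathcal{E}_b)$ in degrees $\{0,2\}$, and the specific numerics of \eqref{main-construct-divisors} are exactly what keep every intermediate stage inside that window.

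\textbf{Phantom conclusion.} Since $\mathcal{T}$ is a compact strong generator of $\mathcal{P}_Y$ with $A=\RHom(\mathcal{T},\mathcal{T})$, Keller's tilting theorem identifies $\mathcal{P}_Y$ with $\mathrm{D}^{\mathrm{perf}}(A)$. Applying the argument of Theorem \ref{our-unversial-phantom} verbatim to the dual collection \eqref{new-equ-coll} shows that $\mathcal{P}_Y$ is a phantom, and therefore so is the derived category of $A$.
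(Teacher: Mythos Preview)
Your co-connectivity target is reversed. A co-connective DG-algebra has $H^m(A)=0$ for $m<0$; the paper's goal is precisely $\Ext^m(\iota^{\ast}\mathcal{F}_i,\iota^{\ast}\mathcal{F}_j)=0$ for $m<0$, not for $m>0$. In fact the vanishing you aim for is \emph{false}: if $\Ext^{>0}$ also vanished then $A$ would be an ordinary finite-dimensional $\CN$-algebra, and for any such nonzero $A$ one has $\mathrm{HH}_0(A)=A/[A,A]\neq 0$, contradicting $\mathrm{HH}_{\bullet}(\mathcal{P}_Y)=0$. Hence your step (b) cannot hold, and the phrase ``tracking these bounded shifts'' conceals the failure: since $\RHom(\mathcal{E}_a,\mathcal{E}_b)$ is concentrated in degree $2$ for $a<b$ (Lemma \ref{strong-generator-lem-2}), the iterated projections that build $M_j$ push cohomology toward \emph{negative} degrees rather than bounding it above, so $\Ext^{\ge 2}(\mathcal{F}_i,M_j)$ is typically nonzero.

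The paper's argument for the correct direction is structurally different. It uses the spectral sequence of \cite[Corollary 2.18]{Mat25}, whose $E_1$-page is controlled by $\Ext^{\ast}(\mathcal{E}_a,\mathcal{E}_b)$; the degree-$2$ concentration from Lemma \ref{strong-generator-lem-2} forces every term of negative total degree to vanish except possibly $E_1^{-1,0}$, and that single term is identified with the kernel of the composition map
\[
\bigoplus_{k=1}^{14}\Hom(\mathcal{F}_i,\mathcal{E}_k)\otimes\Hom(\mathcal{E}_k,\mathcal{F}_j)\longrightarrow\Hom(\mathcal{F}_i,\mathcal{F}_j).
\]
A short direct check then finishes: the only summand with both tensor factors nonzero is $\mathcal{F}_i=\mathcal{E}_k=\CO_Y$, where the map is visibly injective. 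Your projection-triangle framework could in principle be redirected toward the $m<0$ vanishing, but the clean endgame is this degree-$0$ injectivity, not an upper bound on $\Ext$ into $M_j$.
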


\begin{proof}
Since $\{\mathcal{F}_{1},\cdots,\mathcal{F}_{14}\}$ is a full exceptional collection,
every object $K\in \DC(Y)$ can be written in terms of direct sums, cones, and summands of the $\mathcal{F}_{i}$'s.
Since $\iota^{\ast}\circ \iota$ is isomorphic to $\mathrm{id}_{\mathcal{P}_{Y}}$, every element of the phantom $\mathcal{P}_{Y}$ can be written as $\iota^{\ast}K$ for $K\in \DC(Y) $, and all these operations commute with $\iota^{\ast}$. 
Thus, $\mathcal{T}$ is a strong generator of $\mathcal{P}_{Y}$.

Next, we show that $\RHom(\mathcal{T},\mathcal{T})$ is a co-connective DG-algebra, i.e. for any $2 \leq i,j\leq 14$, Ext group $\Ext^{m}(\iota^{\ast}\mathcal{F}_{i},\iota^{\ast}\mathcal{F}_{j})=0$ for $m<0$.
For this purpose, we consider the spectral sequence in \cite[Corollary 2.18]{Mat25}: for $K^{\prime},K\in \DC(Y)$, 
there is a spectral sequence converging to $\textnormal{Hom}^{p+q}(i^{*}K',i^{*}K)$ with $E_{1}^{-p-1,q}$ equal to
\begin{equation}\label{proj-spectral-sq}
\bigoplus_{\substack{1\leq a_{0}<...<a_{p}\leq 14,\\ k_{0}+...+k_{p}+k=q}}
\begin{aligned}[t]
&\textnormal{Hom}^{k}(K',\mathcal{E}_{a_{0}})\otimes\textnormal{Hom}^{k_{0}}(\mathcal{E}_{a_{0}},\mathcal{E}_{a_{1}})\otimes...\otimes\textnormal{Hom}^{k_{p-1}}(\mathcal{E}_{a_{p-1}},\mathcal{E}_{a_{p}})\\
& \qquad  \qquad \qquad  \qquad \otimes\textnormal{Hom}^{k_{p}}(\mathcal{E}_{a_{p}},K)
\end{aligned}
\end{equation}
for $p\geq0$ and $E_{1}^{0,q}=\textnormal{Hom}^{q}(K',K)$, where the differential $d_{1}$ given by signed composition.
By Lemma \ref{strong-generator-lem-2}, we know that $\Ext^{m}(\mathcal{E}_{i},\mathcal{E}_{j})$ is non-zero only for $m=2$.
Hence, $E_{1}^{p,q}$ is $0$ for $p > 0$, for $p = 0, q < 0$, for $p = -1, q < 0$, and for $p < -1, q < -2p-2$. 
The only potentially non-zero term with negative total degree is $E_{1}^{-1,0}$, and  the only non-zero differential into or out of $E_{1}^{-1,0}$ is $d_{1}: E_{1}^{-1,0} \to E_{1}^{0,0}$.
This yields $\Ext^{m}(\iota^{\ast}\mathcal{F}_{i},\iota^{\ast}\mathcal{F}_{j})=0$ for $m\leq -2$ and 
$$
\Ext^{-1}(\iota^{\ast}\mathcal{F}_{i},\iota^{\ast}\mathcal{F}_{j}) = \ker\left( \bigoplus_{k=1}^{14} \Hom(\mathcal{F}_{i}, \mathcal{E}_{k}) \otimes \Hom(\mathcal{E}_{k},\mathcal{F}_{j}) \xrightarrow{\varphi_{k}} \Hom(\mathcal{F}_{i},\mathcal{F}_{j}) \right),
$$
where $\varphi_{k}$ is the natural composition map.
To show $\Ext^{-1}(\iota^{\ast}\mathcal{F}_{i},\iota^{\ast}\mathcal{F}_{j})=0$ for all $2\leq i,j\leq 14$, it is sufficient to verify that the kernel of $\varphi_{k}$ is trivial for all $k$.
Since $2\leq i\leq 14$, so $\mathcal{F}_{i}\neq \CO_{Y}$, by direct computations, we have $\Hom(\mathcal{F}_{i},\mathcal{E}_{k})\otimes \Hom(\mathcal{E}_{k},\mathcal{F}_{j})=0$.
Thus,  $\Ext^{-1}(\iota^{\ast}\mathcal{F}_{i},\iota^{\ast}\mathcal{F}_{j})$ is trivial.

Finally, since $\mathcal{T}$ is a strong generator of $\mathcal{P}_{Y}$,
the phantom subcategory $\mathcal{P}_{Y}$ is  equivalent to the derived category of the DG-algebra $\RHom(\mathcal{T},\mathcal{T})$.
This completes the proof.
\end{proof}

\begin{rem}
As mentioned above, the phantom category $\A_{Y}$ is not equivalent to Krah's phantom category. 
In particular, the co-connective DG-algebra $\RHom(\mathcal{T},\mathcal{T})$ in Theorem \ref{main-corv2} is not derived equivalent to  the co-connective DG-algebra obtained by Mattoo in \cite[Theorem 1.1]{Mat25}.
\end{rem}

\subsection{Projections of skyscraper sheaves}

The problem of detecting non-trivial objects in a phantom subcategory is of significant importance. 
In \cite[Proposition 4.2]{Mat25}, Mattoo proved that the projections of skyscraper sheaves in a phantom subcategory are non-trivial. 
This gives some non-trivial objects in Krah's phantom subcategory. 
Likewise, we have the following:

\begin{prop}\label{RHom-projected-skyscraper-sheaves}
Let $\CO_{y}$ be the skyscraper sheaf at a closed point $y\in Y$.
Then
$$
\RHom(\iota^{\ast}\CO_{y},\iota^{\ast}\CO_{y})=\CN[0]\oplus \CN^{15}[-1]\oplus \CN^{100}[-2]\oplus \CN^{152}[-3]\oplus \CN^{66}[-4].
$$
In particular, the object $\Phi\circ \iota(\iota^{\ast} \CO_{y})\in \A_{Y}$ is non-trivial.
\end{prop}

\begin{proof} 
By Riemann--Roch formula, the Euler characteristics
$$
\chi(\CO(-D_{i}),\CO)=1, \chi(\CO(-F),\CO(-D_{i}))=2, \chi(\CO(-2F),\CO(-D_{i}))=5, 
$$
$$
\chi(\CO(-F),\CO)=3 \,\; \textrm{and}\,\; \chi(\CO(-2F),\CO)=6,
$$
where $1\leq i\leq 11$.

Note that $\RHom(\CO_{y},\CO_{y})=\CN[0]\oplus\CN^{2}[-1]\oplus\CN[-2]$ for any closed point $y\in Y$.
Let $\{\mathcal{E}_{1},\mathcal{E}_{2},\cdots,\mathcal{E}_{14}\}$ be the exceptional collection as in \eqref{new-equ-coll}.
By the spectral sequence \eqref{proj-spectral-sq}, we get $E_{1}^{0,q}$ is non-zero if and only if $q=0,1,2$.
Next, we discuss the case $p\geq 0$.
Since all $\mathcal{E}_{i}$ are line bundles, so we get $\RHom(\mathcal{E}_{i},\CO_{y})=\CN[0]$ and $\RHom(\CO_{y},\mathcal{E}_{i})=\CN[-2]$.
According to \eqref{proj-spectral-sq}, we know that $E_{1}^{-1-p,q}$ is equal to
\begin{equation}\label{skyscraper-equ-1}
\bigoplus_{\substack{1\leq a_{0}<...<a_{p}\leq 14,\\ k_{0}+...+k_{p}+k=q}}
\begin{aligned}[t]
& \Ext^{k}(\CO_{y},\mathcal{E}_{a_{0}})\otimes\Ext^{k_{0}}(\mathcal{E}_{a_{0}},\mathcal{E}_{a_{1}})\otimes...\otimes\Ext^{k_{p-1}}(\mathcal{E}_{a_{p-1}},\mathcal{E}_{a_{p}})\\
&\qquad \qquad \qquad \qquad \qquad  \otimes\Ext^{k_{p}}(\mathcal{E}_{a_{p}},\CO_{y})
\end{aligned}
\end{equation}
converging to $\Ext^{p+q}(\iota^{\ast}\CO_{y},\iota^{\ast}\CO_{y})$.
For $ i\in \{1,\cdots,p-1\}$, Ext group $\Ext^{k_{i}}(\mathcal{E}_{a_{i}},\mathcal{E}_{a_{i+1}})$ is non-zero 
if and only if $k_{i}=2$ and at most one of $\mathcal{E}_{a_{i}}$ and $\mathcal{E}_{a_{i+1}}$ takes the form $\CO(-D_{l})$ for some $1\leq l\leq 11$.
Hence,
we derive $E_{1}^{-1-p,q}\neq 0$ if and only if  $0\leq p\leq 3$ and $q=-2(-1-p)$.
To determine all non-zero terms at $E_{1}$-page,
we define $\varphi(\mathcal{E}_{a_{0}},\cdots,\mathcal{E}_{a_{m}})$ to be the dimension of the complex vector space
$$
\Ext^{2}(\CO_{y},\mathcal{E}_{a_{0}})\otimes\Ext^{2}(\mathcal{E}_{a_{0}},\mathcal{E}_{a_{1}})\otimes \cdots \otimes\Ext^{2}(\mathcal{E}_{a_{m-1}},\mathcal{E}_{a_{m}})\otimes \Hom(\mathcal{E}_{a_{m}},\CO_{y}).
$$
Then, we deduce that $\dim E_{1}^{-1,2}=\sum_{i=1}^{14}\varphi(\mathcal{E}_{i})=14$ and 
\begin{align*}
 \dim E_{1}^{-2,4} &=
11\varphi(\CO(-D_{i}),\CO)+11\varphi(\CO(-F),\CO(-D_{i}))+\varphi(\CO(-2F),\CO)\\
&+
11\varphi(\CO(-2F),\CO(-D_{i}))+\varphi(\CO(-F),\CO)+\varphi(\CO(-2F),\CO(-F))\\&=
11\cdot 1+11\cdot 2+6+11\cdot 5+3+3\\&=
100.
\end{align*}
Similarly, by direct computations, we have
$$
\dim E_{1}^{-3,6}=152\,\;
\textrm{and}\; \dim E_{1}^{-4,8}=66.
$$
All differentials vanish except for
$d_{1}:E_{1}^{-1,2}\rightarrow E_{1}^{0,2}$, which is the sum of the evaluation maps
\begin{equation}\label{evaluation-map-surj}
\bigoplus_{i=1}^{14}\Ext^{2}(\CO_{y},\mathcal{E}_{i})\otimes\Hom(\mathcal{E}_{i},\CO_{y})\to\Ext^{2}(\CO_{y},\CO_{y})=\CN.
\end{equation}
Any element of $\Hom(\mathcal{E}_{i},\CO_{y})$ corresponds to an element of the stalk of $\mathcal{E}_{i}$ at $y$.
Thus, the evaluation map is 
$$\Ext^{2}(\CO_{y},\mathcal{E}_{i})\to\Ext^{2}(\CO_{y},\CO_{y}). $$ 
By Serre duality, we have the dual map 
$$\Hom(\CO_{y},\CO_{y})\rightarrow\Hom(\mathcal{E}_{i},\CO_{y}), $$
which is non-zero as it corresponds to scaling the chosen section of $(\mathcal{E}_{i})_{y}$.
Thus, the dual map is injective.
It follows that the map \eqref{evaluation-map-surj} is surjective, $\dim E_{2}^{-1,2}=13$ and  $\dim E_{2}^{0,2}=0$.
Hence, we have $
\Ext^{1}(\iota^{\ast}\CO_{y},\iota^{\ast}\CO_{y})=E_{1}^{0,1}\oplus E_{2}^{-1,2}=\CN^{15}.
$
In conclusion, we get $E_{2}^{-1,2}=\CN^{13}$, $E_{2}^{0,2}=0$ and all other terms $E_{2}^{\bullet,\bullet}=E_{1}^{\bullet,\bullet}$ at $E_{2}$-page.
Therefore, the spectral sequence \eqref{skyscraper-equ-1} degenerates at $E_{2}$-page and the proposition holds.
\end{proof}

\begin{rem}
The above proposition also implies that the phantom subcategory $\A_{Y}$ is non-trivial; namely, the exceptional collection \eqref{num-exceptional-collection} is non-full.
\end{rem}


\appendix

\section{The second Hochschild cohomology of $\A_{Y}$}\label{Appendix-HH-AY}

In this appendix,
we will obtain the dimension of the second Hochschild cohomology of phantom subcategory $\A_{Y}$ in Theorem \ref{main-thm}.

Let $\pi: Y \rightarrow \PB^{2}$ be the blow-up of the complex projective plane $\PB^{2}$ in $n$ general closed points $p_{i}$ ($n\geq 10$), where $1\leq i\leq n$.
We use $E_{i}:=\pi^{-1}(p_{i})\subset Y $ to denote the $(-1)$-curve over the points $p_{i}$. First we calculate the Hochschild cohomology of $Y$. 
We need the cohomology of wedge product of tangent sheaf of $Y$.

\begin{lem}\label{coh-tangent}
Let $\CT_{Y}$ be the tangent sheaf of $Y$. 
Then, we have
$$
\HO^{\bullet}(Y,\wedge^{j} \CT_{Y}) 
= 
\left\{ 
\begin{array}{cc}
\CN[0], & j=0,\\
\CN^{2n-8}[-1], & j=1,\\
\CN^{n-10}[-1], & j=2.
\end{array}
\right.
$$
\end{lem}

\begin{proof}
For $j=0$, then $\HO^{\bullet}(Y,\wedge^{j} \CT_{Y})=\HO^{\bullet}(Y,\CO_{Y})=\CN[0]$.
For $j=1$, the global sections of $\mathcal{T}_Y$ correspond to those vector fields on ${\PB^2}$ which vanishes at the $n$ general points. 
Therefore 
$$
h^{0}(T_{Y}):=\dim \HO^{0}(Y,\CT_{Y})=\max\{0,8-n\}=0.
$$  
Next, we calculate $\HO^{i}(Y,\CT_{Y})$ for $i>0$. Consider the following short exact sequence for the tangent sheaves 
$$
\xymatrix@C=0.5cm{
0\ar[r] & \CT_{Y} \ar[r] & \pi^*\CT_{\PB^2} \ar[r] & \bigoplus_i \CO_{E_i}(1) \ar[r] & 0}.
$$
Since $R\pi_{\ast}\CO_{Y}\cong \CO_{\PB^{2}}$, 
by projection formula, 
we have $
\HO^i(Y,\pi^*\CT_{\PB^2}) \cong \HO^i(\PB^2, \CT_{\PB^2}).
$
Note that $\HO^i(Y, \CO_{E_i}(1)) = \HO^i(\PB^1,\CO(1))$, then by the long exact sequence, we obtain that $\HO^{1}(Y,\CT_{Y})=\mathbb{C}^{2n-8}$ and the other cohomologies are zero (also see  \cite[Lemma 3.4]{KKL+26}).

For $j=2$, note that $\HO^{i}(Y,\wedge^{2}\CT_{Y}) = \HO^{i}(Y,\CO(-K_Y))$. 
Denote the scheme of union of $n$ blow-up points on $\PB^2$ by $Z$ and $I_{Z}$ the ideal sheaf of $Z$ in $\PB^{2}$.
Since $R\pi_{\ast}\left(\CO(-\sum_{i=1}^{11}E_{i})\right)\cong I_{Z}$, by projection formula, 
we have 
$$
\HO^i(Y,\CO(-K_Y)) \cong \HO^i(\PB^2, I_Z(3)).
$$
Taking cohomology of the short exact sequence
$$
\xymatrix@C=0.5cm{
0 \ar[r] & I_Z(3) \ar[r] & \CO_{\PB^2}(3) \ar[r] & \CO_Z \ar[r] & 0,}
$$
we obtain a long exact sequence
$$
\xymatrix@C=0.5cm{
0\ar[r] & \HO^{0}(I_Z(3)) \ar[r] & \HO^0(\CO_{\PB^2}(3)) \ar[r] & \HO^{0}(\CO_Z) \ar[r] & \HO^1(I_Z(3)) \ar[r] & 0.}
$$
The $\HO^{0}(I_Z(3))$ parametrizes the linear space of degree $3$ polynomials with $3$ variables which vanish at $n$ general points. 
Therefore $ h^{0}(I_{Z}(3))=\mathrm{max}\{0,10-n\}=0$ and 
$$
h^{1}(I_{Z}(3))= h^{0}(I_{Z}(3))-h^{0}(\CO_{\PB^2}(3))+h^0(\CO_Z)=n-10.
$$
This completes the proof.
\end{proof}

Recall the Hochschild--Kostant--Rosenberg (HKR) theorem:
$$
\HH^i(Y) = \bigoplus_{p+q=i} \HO^q(Y,\wedge^p\mathcal{T}_Y),
$$
it follows from Lemma \ref{coh-tangent} that we have the following result:
\begin{cor}\label{Hoch-cohomology-Y}
The Hochschild cohomology of $Y$ is given by 
$$
\HH^{0}(Y)=\CN,\,\HH^{1}(Y)=0,\,\HH^{2}(Y)=\CN^{2n-8},\,\HH^{3}(Y)=\CN^{n-10}$$
and $\HH^{i}(Y)=0 \textrm{ for } i\geq 4$.
\end{cor}

Now suppose $Y$ is the blow-up of $\PB^{2}$ in $11$ general closed points. 
We use
$$
\mathcal{B}:=\langle\CO_{Y},\CO_{Y}(D_{1}), \cdots,\CO_{Y}(D_{11}),\CO_{Y}(F),\CO_{Y}(2F) \rangle\subset \DC(Y)
$$
to denote the admissible subcategory generated by the sequence \eqref{num-exceptional-collection}.
We mainly calculate 
the third normal Hochschild cohomology of $\mathcal{B}$ in $\DC(Y)$ and then obtain the dimension of the second Hochschild cohomology $\HH^{2}(\A_{Y})$.

\begin{prop}\label{normal-Hoch-cohomology-Y}
The third normal Hochschild cohomology $\NHH^{3}(\mathcal{B},Y)=\CN^{14}$ and the second Hochschild cohomology $ \HH^{2}(\mathcal{A}_{Y})= \CN^{27}$.
\end{prop}

\begin{proof}
By \cite[Proposition 3.7]{Kuz15}, there exists a spectral sequence converging to the normal Hochschild cohomology
\begin{equation}\label{NHH--spectral-1}
E_1^{-p,q} \Rightarrow \NHH^{q-p}(\mathcal{B}, Y),
\end{equation}
where $E_{1}^{-p,q}$ is given by 
\begin{equation*}\label{NHH-spectral-seq}
\bigoplus_{\substack{0 \leq a_0 < \cdots < a_p \leq 13 \\ k_0 + \cdots + k_p = q}} \Ext^{k_0}(L_{a_0}, L_{a_1}) \otimes \cdots \otimes 
\Ext^{k_{p-1}}(L_{a_{p-1}}, L_{a_p}) \otimes 
 \Ext^{k_p}(L_{a_p}, \mathrm{S}^{-1}(L_{a_0})),
\end{equation*}
$\mathrm{S}(-):=-\otimes \omega_{Y}[2]$ is the Serre functor of $\DC(Y)$, $L_{0}:=\CO_{Y}$, $L_{i}:=\CO_{Y}(D_{i-1})$($1\leq i\leq 11$), $L_{12}:=\CO_{Y}(F)$ and  $L_{13}:=\CO_{Y}(2F)$.
By direct computations, we derive that
$$\RHom(L_{a_{p}},\mathrm{S}^{-1}(L_{a_{0}}))=\CN^{-\chi(\mathrm{S}^{-1}(L_{a_{0}})\otimes L_{a_{p}}^{\vee})}[-3]$$
and in particular, the term $\Ext^{3}(L_{a_{0}},\mathrm{S}^{-1}(L_{a_{0}}))=\CN$.
By Lemma \ref{strong-generator-lem-2}, we know that the terms $\Ext^{k_i}(L_{a_i}, L_{a_{i+1}})$ have nontrivial values only for $k_{i}=2$.
Hence, $E_{1}^{-p,q}$ is nontrivial only if $2p+3=q$.
If $p-q=3$ and $2p+3=q$, then $p=0$ and $q=3$.
It follows that $E_{1}^{0,3}=\bigoplus_{a_{0}=0}^{13}\Ext^{3}(L_{a_{0}},\mathrm{S}^{-1}(L_{a_{0}}))=\CN^{14}$.
Therefore, by the spectral sequence \eqref{NHH--spectral-1}, the normal Hochschild cohomology $\NHH^{3}(\mathcal{B},Y)=\CN^{14}$.

By Lemma \ref{excep-coll3}, we obtain $\NHH^{2}(\mathcal{B},Y)=0$.
According to \cite[Theorem 3.3]{Kuz15}, we have the following exact sequence:
$$
\xymatrix@C=0.5cm{
0 \ar[r] & \HH^{2}(Y) \ar[r] & \HH^{2}(\mathcal{A}_{Y}) \ar[r] & \mathrm{NHH}^{3}(\mathcal{B},Y) \ar[r]^{\delta} & \HH^{3}(Y).
}
$$
By Corollary \ref{Hoch-cohomology-Y} above, we have $\HH^{2}(Y)=\CN^{14}$ and $\HH^{3}(Y)=\CN$.
By Lemma \ref{non-trivial map}, the map $\delta$ is non-trivial and thus surjective. 
Using the above exact sequence, we conclude $ \dim \HH^{2}(\A_{Y})=27$.
\end{proof}

\begin{lem}\label{non-trivial map}
 The map $\delta$ is non-trivial.  
\end{lem}

\begin{proof}
Let $\mathscr{D}_{Y}$ be the DG enhancement of $\DC(Y)$ constructed in \cite[Section 2.4]{Kuz15}, and let $\mathscr{B}\subset \mathscr{D}$ be the natural induced DG enhancement of $\mathcal{B}$. 
Then there is an exact triangle 
$$
\xymatrix@C=0.5cm{
Q \ar[r] & \Delta_{\ast}\CO_{Y} \ar[r] & P,
}
$$
where $\Delta: Y\hookrightarrow Y\times Y$ is the diagonal, $Q\in \mathcal{B}\boxtimes \DC(Y)$ and
$P\in \mathcal{A}_{Y}\boxtimes \DC(Y)$.
By \cite[Proposition 2.6]{Kuz15},  
$Q\simeq \mu(\mathscr{D}\otimes_{\mathscr{B}}^{\mathbf{L}}\mathscr{D}),
$
where $\mu$ is the equivalence from $\mathscr{D}$-$\mathscr{D}$-bimodules to Fourier--Mukai kernels on $Y\times Y$ obtained in \cite[Page 12]{Kuz15}.
The kernel morphism
$
Q\longrightarrow \Delta_{\ast}\CO_{Y}
$
is induced by the DG composition morphism
$
\mathscr{D}\otimes_{\mathscr{B}}^{\mathbf{L}}\mathscr{D}\longrightarrow \mathscr{D}.
$
Moreover, the map
$\delta$
is induced by this kernel morphism $Q\to \Delta_*\CO_{Y}$.
In the direct sum of $\mathscr{D}\otimes_\mathscr{B}^{\mathbf L}\mathscr{D}$ in \cite[Page 15]{Kuz15}, the $p=0$ summand corresponding to $L_{i}$ is
$
\mathscr{D}(-,L_{i})\otimes \mathscr{D}(L_{i},-), 0 \leq i\leq 13.
$
Under the functor $\mu$, this bimodule corresponds, up to the convention for the two factors, to the kernel
$
L_{i}\boxtimes L_{i}^\vee.
$
The composition morphism on this summand is the usual composition pairing
$$
\mathscr{D}(-,L_{i})\otimes \mathscr{D}(L_{i},-)
\longrightarrow
\mathscr{D}(-,-),
$$
and therefore under $\mu$ it becomes a non-trivial  morphism
$
e_{i}:
L_{i}\boxtimes L_{i}^\vee
\longrightarrow
\Delta_*\mathcal O_Y.
$
By adjunction, we obtain $\Hom(L_{i}\boxtimes L_{i}^{\vee},\Delta_{\ast}\CO_{Y})\cong \Hom(L_{i}\otimes L_{i}^{\vee},\CO_{Y})=\CN$.
Thus, we derive
$e_{i}=\Delta_{\ast}(\mathrm{ev}_{L_{i}})\circ \eta$ up to a nonzero scalar,
where $\eta$ is the counit of $L_{i}\boxtimes L_{i}^{\vee}$ under $\Delta$ and $\mathrm{ev}_{L_{i}}$ is the evaluation morphism $L_{i}\otimes L_{i}^{\vee}\rightarrow \CO_{Y}$.

Since $
\Delta^{!}(L_{i}\boxtimes L_{i}^\vee)
\cong
\omega_{Y}^{-1}[-2]
$
and 
$
\Delta^{!}\Delta_{\ast} \CO_{Y}
\cong 
\mathcal \CO_{Y}\oplus T_{Y}[-1]\oplus\omega_{Y}^{-1}[-2].
$
The morphism 
$
\Delta^{!}e_{i}=\Delta^{!}(\Delta_{\ast}(\mathrm{ev}_{L_{i}})\circ \eta)
$ induces, up to a nonzero scalar factor determined by the evaluation morphism $\mathrm{ev}_{L_{i}}$, the identity map from $\omega_{Y}^{\vee}[-2]$ to itself.
Taking cohomology, we get 
$\delta=\bigoplus_{i=0}^{13}H^{3}(\Delta^{!}e_{i})\neq 0$.
\end{proof}

\section{No phantoms on weak del Pezzo surfaces}
\label{No-Ph-Wdel}

In this appendix, we verify Conjecture 1.3 of Borisov--Kemboi \cite{BK25} for the following surfaces:

\begin{prop}\label{Wdl-no-phantoms}
Let $S$ be a smooth projective surface with an effective smooth anti-canonical divisor $E$. 
Then $\DC(S)$ has no phantoms.
\end{prop}

\begin{proof}
Suppose that $\mathcal{C}\subset \DC(S)$ is a non-trivial phantom subcategory.
Let $Z:=\supp(\mathcal{C})=\bigcup_{F\in \mathcal{C}} \supp(F)$ be the  support of $\mathcal{C}$, where 
$\supp(F)$ is the support of the object $F$. Then, by \cite[Lemma 3.6]{Pir26}, $Z\subset S$ is a closed subset.
Note that there is a semi-orthogonal decomposition
$$
\DC(S)=
\langle 
\mathcal{C}^{\perp},\mathcal{C}
\rangle.
$$
Since $S$ is a rational surface and $E$ is an elliptic curve, by the same arguments as Step $1$ and Step $2$ in the proof of \cite[Theorem 3.1]{BK25},
we obtain that the skyscraper sheaf 
$\CO_{p}\in \mathcal{C}^{\perp}$ for every closed point $p\in E$.
As a result, 
for every object $F\in \mathcal{C}$, we have $\RHom(F,\CO_{p})=0$. 
This means that $p\not\in \supp(F)$ for all points $p\in E$. 
Thus, we have $Z\cap E =\emptyset$.
Since we assume $\mathcal{C}$ is non-trivial, by \cite[Lemma 6.16]{Pir23}, $\mathcal{C}$ must have $1$-dimensional support. 
Then, according to \cite[Theorem 1.1]{Pir26}, there exists an irreducible  component $C\subset Z$ such that 
$$
K_{S}\cdot C<0.
$$
Since $E\in |-K_{S}|$, so $E\cdot C=(-K_{S})\cdot C>0$.
However, we have $E\cdot C=0$ since $Z\cap E=\emptyset$, a contradiction.
\end{proof}

In particular, we have:

\begin{cor}\label{important-special-case}
There are no phantom admissible subcategories on  
weak del Pezzo surfaces, on the blow-up of a weak del Pezzo surface along any finite points in general position on an anti-canonical divisor, and on the the blow-up of the second Hirzebruch surface $\mathbb{F}_{2}$ at $d\leq 8$ points in general position.  
\end{cor}

\begin{proof}
Note that all the rational surfaces in corollary have an effective smooth anti-canonical divisor. 
Thus, it follows immediately from Proposition \ref{Wdl-no-phantoms}. 
\end{proof}

\begin{rem}  
(1) Let $S$ be the blow-up of $\PB^{2}$ at finite points on a smooth cubic curve $C$. 
Then $S$ has no phantoms. 
In fact, let $\tilde{C}$ be the strict transform of $C$.
Then $E:=\tilde{C}\in|-K_{S}|$ is a smooth curve.

(2) Suppose that $S$ is a weak del Pezzo surface but not del Pezzo.
Then there exists a $(-2)$-curve $C\subset S$.
Thus, $E\cdot C=(-K_{S})\cdot C=0$
and $\CO_{S}(C)|_{E}\cong \CO_{E}$.
Consequently, the restriction map $\mathrm{Pic}(S) \rightarrow \mathrm{Pic}(E)$ is not injective.
Therefore, Proposition \ref{Wdl-no-phantoms} is a non-trivial generalization of both \cite[Theorem 6.35]{Pir23} and \cite[Theorem 1.1]{BK25}. 
\end{rem}

\begin{rem}
Recall that a weak del Pezzo surface is isomorphic to the $0$-th Hirzebruch surface $\mathbb{F}_{0}$, the second Hirzebruch surface $\mathbb{F}_{2}$ or to the blow-up of a bubble cycle on $\PB^{2}$ that consists of $\leq 8$ points.
By Corollary \ref{important-special-case}, \cite[Conjecture 4.10]{KKL+26} holds for the case of $\mathbb{F}_{2}$.
It is still an open problem for the case of the Hirzebruch surface $\mathbb{F}_{n}$ with $n\geq 3$.   
\end{rem}


\end{document}